\newtheorem{thm}{Theorem}[section]
\newtheorem{cor}[thm]{Corollary}
\newtheorem{lem}[thm]{Lemma}
\newtheorem{problem}[thm]{Problem}
\theoremstyle{remark}
\theoremstyle{definition}
\newtheorem{alg}[thm]{Algorithm}
\numberwithin{equation}{section}
\numberwithin{thm}{section}
\newcommand{\e }{\varepsilon }
\newcommand{\Comp}{{\rm Comp\,}}
\newcommand{\abs}[1]{\vert #1 \vert}
\newcommand{\inv}{^{-1}}
\newcommand{\W}[1]{W^{( #1 )}}
\begin{document}



\subjclass{Primary 20F10. Secondary 20F67, 43A07}



\title{exponentially generic subsets of groups}


%

\author{Robert Gilman}
\address{Department of Mathematical Sciences\\Stevens Institute of Technology\\Hoboken, NJ 07030}
\email{rgilman@stevens.edu}

\author{Alexei Miasnikov}
\address{Department of Mathematical Sciences\\Stevens Institute of Technology\\Hoboken, NJ 07030}
\email{amiasnikov@gmail.com}

\author{Denis Osin}
\address{Mathematics Department\\
Vanderbilt University\\
Nashville, TN 37240}
\email{denis.v.osin@vanderbilt.edu}


%


%

\begin{abstract}In this paper we study the generic, i.e., typical, behavior of finitely generated subgroups of hyperbolic groups and also the generic behavior of the word problem for amenable groups. We show that a random set of elements of a nonelementary word hyperbolic group is very likely to be a set of free generators for a nicely embedded free subgroup. We also exhibit some finitely presented amenable groups for which the restriction of the word problem is unsolvable on every sufficiently large subset of words. 
\end{abstract}

\maketitle



\maketitle

\section{Introduction}

Natural sets of algebraic objects are often unions of two unequal parts, the larger part consisting of generic objects whose structure is uniform and relatively simple, and the smaller including exceptional cases which have much higher complexity and provide most of resistance to classification. The essence of this idea first appeared in the form of zero-one laws in probability, number theory, and combinatorics.  In finite group theory the idea of genericity can be traced to a series of papers by Erd\H{o}s and Turan in 1960-70's (for recent results see~\cite{shalev}), while in combinatorial group theory the concept of generic behavior is due to Gromov. His inspirational  works \cite{Gro,Gro2,Gro3} turned the subject into an area of very active research, see for example~\cite{AO,A1,A2,BMR1,BMR2,BMR3,BogV,BV,BMS,CERT,BM,BV,Champetier1,Champetier2, Jit, KMSS1,KMSS2,KSS,KRSS,KR,Ollivier,Olsh,Rom,MTV,Zuk}.  

We mention in particular the remarkable results due to Kapovich and Schupp on generic properties of one-relator groups \cite{KS1,KS2} and by Maher \cite{maher} and Rivin \cite{Rivin} on generic properties of random elements of mapping class groups and automorphisms of free groups, as well as the theorem by Kapovich, Rivin, Schupp and Shpilrain that generic cyclically reduced elements in free groups are of minimal length in their automorphic orbits \cite{KRSS}. An earlier  series of papers \cite{Olsh,AO,A1,A2}  by Arjantseva and Olshanskii established the theory of subgroups of random groups and related questions. 

Knowledge of  generic properties of objects can be used in design of  simple practical algorithms that work very fast on most inputs. In cryptography, several successful attacks have exploited  generic properties of randomly chosen  objects to break cryptosystems \cite{MU,MSUbook,MSU,RST}. Explicit generic case analysis of algorithmic problems first appeared in the papers~\cite{KMSS1,KMSS2,BMR1}. 

In the first part of this paper we show that with high probability a random subgroup of a  nonelementary hyperbolic group has a simple structure and is embedded without much distortion of its intrinsic metric. Arbitrary subgroups on the other hand, can be very complicated. A remarkable  construction introduced by Rips~\cite{Rips} shows that every finitely presented group $G$ is a quotient of a hyperbolic (in fact small cancellation) group $H$ by a finitely generated normal subgroup $N$. The Dehn function of $G$ is intimately related to the metric distorsion of the subgroup $N$ in $H$. In particular, as Rips noticed, the membership problem for $N$ in $H$ is undecidable provided the word problem in $G$ is undecidable. A host of undecidability results for subgroups of hyperbolic groups has been proven by combining the Rips technique with known unsolvability results for finitely presented groups  (\cite{BMS}, \cite{BW}). These results show that hyperbolic groups contain finitely generated subgroups with as much distortion as one pleases. However, it is widely believed that such subgroups are rare, and that most finitely generated subgroups of hyperbolic groups have an uncomplicated structure and not much distortion.

We prove here that for each $k \geq 1$, with overwhelming probability (relative to a natural distribution)  $k$-tuples of words in a given finite  set of generators of a non-elementary hyperbolic group freely generate a free subgroup which is quasi-isometrically embedded into the ambient group. The property that a random $k$-tuple of words is, with overwhelming probability, a set of free generators is sometimes referred to as the  generic Nielsen property.  In \cite{MU} Myasnikov and Ushakov proved that similar results hold in pure braid groups, as well as right angled Artin groups. This result has been applied to a rigorous mathematical cryptanalysis of the Anshel-Anshel-Goldfeld public key exchange scheme \cite{AAG}, including an analysis of various length-based attacks (\cite{HT}, \cite{GK}, \cite{RST}).  For free non-abelian groups the generic Nielsen property was shown earlier in \cite{Jit} and \cite{MTV}. Notice, that in the case of free groups, all finitely generated subgroups are free and embedded quasi-isometrically.

For related results on free products with amalgamation and HNN extensions we refer to \cite{FMR}. Beyond cryptographic applications our results on generic subgroups in hyperbolic groups provide a cubic time deterministic partial algorithm $\mathcal{A}$, which never lies and solves the membership problem for almost all (more precisely for a certain exponentially generic subset $\mathcal{D}$) of finitely generated subgroups in a given non-elementary hyperbolic group. Furthermore, if a given  subgroup is not in the set $\mathcal{D}$ the algorithm quickly recognizes this (in quadratic time) and halts with a failure message.

Another result we would like to mention here concerns with the complexity of the word problem in finitely presented groups. It turns out that many famous undecidable problems are, in fact, very easy on generic set of inputs. This is precisely the case for the halting problem of Turing machines with one-ended infinite tape~\cite{HM}, and for the classical examples of finitely presented groups or semigroups with undecidable word problem \cite{MUW}. The first examples of finitely presented semigroups where the word problem is undecidable on any generic set of inputs (words in the given set of generators) are constructed in \cite{MR}. Whether there exist such examples in finitely presented groups is still an open problem. In this paper we describe some finitely presented groups for which the word problem is undecidable on any exponentially generic set of words in given generators. The famous construction \cite{Kh}, due to Kharlampovich, of finitely presented solvable groups with undecidable word problem provide a host of examples of such groups.

In the next section we describe our main results in detail, and prove them in the following sections. The last section contains several open problems in this area.

\section{Statement of results}

Fix a finite alphabet with formal inverses, $A=\{ a_1, \ldots , a_m, a_1\inv, \ldots, a_m\inv \}$ for some $m\ge 2$. Use $\abs w$ to denote the length of a word $w$ over $A$ and $\abs S$ for the cardinality of a set $S$. Formal inverses, $w\inv$, are defined in the obvious way.

By $W$ we denote the free monoid with basis $A$, that is, the set of all words over the alphabet $W$ with the binary operation of concatenation. The subset $W_n = \{w \in W \mid \abs{w} \le n\}$ is the disk of radius $n$ in $W$, and $W = \cup_{n = 1}^\infty W_n$ is the stratification of $W$ by disks. Since every disk is finite, one may define the standard uniform distribution $\mu_n$ on $W_n$. The ensemble of distributions $\{\mu_n\}$, after a proper normalization, induces the standard ''uniform distribution`` $\mu$ on $W$ relative to the stratification by disks.

The exponential asymptotic density of $X\subset W$ is defined as
\[\rho_e(X)=\lim_{n\to\infty}\frac{|X\cap W_n|}{|W_n|}\]
if the limit converges exponentially fast.  In other words
$$\rho_e(X)= \lambda  \Longleftrightarrow |\lambda - \frac{|X\cap W_n|}{|W_n|}| \le \alpha^n$$
 for some constant $\alpha \in (0,1)$ and all sufficiently large $n$,
or equivalently if
$$|\lambda - \frac{|X\cap W_n|}{|W_n|}|  \le M\beta^n$$
 for some $\beta \in (0,1)$, positive constant $M$ and all $n$.

$X\subset W$ is {\em exponentially generic} if $\rho_e(X)=1$ and {\em exponentially negligible} if its complement is exponentially generic, i.e., if $\rho_e(X)=0$. It is clear that finite intersections of exponentially generic sets are exponentially generic and finite unions of exponentially negligible sets are exponentially negligible. See \cite{BMS,BMR1} for more information on asymptotic density.

To study asymptotic properties of $k$-generated subgroups of groups generated by $A$ we need to extend the notions introduced above to subsets of  $k$-tuples of words from $W$. For $k\ge 1$ put
\begin{equation}\label{eq:ktuples}
\W k = \{(w_1,\ldots, w_k) \mid w_i \in W\}.
\end{equation}
 The disk of radius $n$ in $\W k$ is defined to be
\begin{equation}\label{eq:ktuple-disks}
\W k_n  = \overbrace{W_n\times \cdots \times W_n}^k = \{(w_1, \ldots,w_k)  \in\W k\mid \abs{w_i} \le n\}.
\end{equation}
Exponential asypmtotic density of subsets of $\W k$ is defined as above but with $\W k_n$ in place of $W_n$. When $k$ is fixed or irrelevant, we write
\[ \vec w  \mbox{ for }(w_1, \ldots,w_k)\mbox{,  and }\abs{\vec w}\mbox{ for }  \max\{\abs{w_i} \mid i = 1, \ldots,k \}.\]

For any group $G$  a monoid epimorphism $W \to G$ which respects inverses is called a choice of generators for $G$, and the image in $G$ of $w\in W$ is denoted $\overline w$. Each choice of generators determines a word metric with distance $\abs{g-h}$ equal to the length of the shortest word in $W$ representing $g\inv h$. We abbreviate $\abs{g-1}$ as $\abs g$ or $\abs{g}_G$ if the ambient group is not clear. Note that for $w\in W$, $\abs w$ is the length of $w$ while $\abs{\overline w}$ is the length of the shortest word in $W$ mapping to $\overline w$.

Let $H$ be a finitely generated subgroup of $G$ with choice of generators $B^\ast \to H$ for some finite alphabet $B$ with formal inverses. The subgroup $H$ is {\em undistorted} in $G$ (with respect to the choices of  generators for $G$ and $H$) if it is {\em quasi-isometrically} embedded in $G$, i.e., there is a constant $\lambda > 1$ such that for every elements $f,h \in H$ the following inequality holds
$$ \frac{1}{\lambda} |f-h|_H \leq  |f-h|_G.$$

A nontrivial subgroup $H$ is undistorted if and only if the compression factor of $H$ in $G$ is positive. The compression factor of $H$ in $G$ (with respect to choices of generators $A\to G$ and $B\to H$) is defined as
\begin{equation}\label{eq:compression}
\Comp (G, A; H, B) = \inf\limits_{h\in H\setminus \{ 1\} }
\frac{|h|_G}{|h|_{H}}
\end{equation}
where
\[
|h|_{H}=\min\limits_{h=b_{i_1}\cdots b_{i_s}} (|b_{i_1}|_G +\cdots
+|b_{i_s}|_G),\]
and the minimum is taken over all representations
of $h$ in the form $b_{i_1}\cdots b_{i_s}$ with $b_{i_j}\in B$, $1\le j\le s$.

Recall that the {\em gross cogrowth} $\theta$ of $G$ with respect to a choice of generators $W \to G$ is
defined by
\begin{equation}\label{eq:cogrowth}
\theta = \lim_{n\to \infty}\frac{1}{2n} \log_{2m} |V_{2n}|
\end{equation}
where for any $r$, $V_r$ is the subset of all words of length $r$ in $W$ which represent the identity in $G$.
It is known (see Section \ref{subsec:amenable} for details and references) that $G$ is amenable if and only if $\theta = 1$.

The main technical result of the paper is Lemma~\ref{C}, which says that a certain set $\mathcal C\subset \W k$ which is defined in terms of a parameter $\e > 0$ is exponentially generic. The exponentially generic sets mentioned in the next two theorems all contain $\mathcal C$. Recall that a group is called {\em elementary} if it contains a cyclic subgroup of finite index.

\begin{thm}\label{hyp}
Let $G$ be a non-elementary hyperbolic group. Then  for any  choice of generators $W\to G$ the following sets are exponentially generic.
\begin{enumerate}
\item\label{hyp1} The set of all $(w_1, \ldots, w_k)\in W^{(k)}$ for which $\overline w_1, \ldots, \overline w_k$ generate a free subgroup of rank $k$ in $G$.
\item\label{hyp2} The set of all $(w_1, \ldots, w_k)\in W^{(k)}$ for which $\overline w_1, \ldots, \overline w_k$ generate a subgroup with compression factor at least $\frac{1-\theta}{\theta} -\e$, where $\theta $ is the gross cogrowth of $G$ with respect to the given choice of generators and $\e$ is any positive constant.
\end{enumerate}
\end{thm}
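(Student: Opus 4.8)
The plan is to derive both assertions from the main technical result, Lemma~\ref{C}, together with the trivial monotonicity of exponential density: if $\rho_e(X)=1$ and $X\subseteq Y\subseteq\W k$, then $\abs{X\cap\W k_n}\le\abs{Y\cap\W k_n}\le\abs{\W k_n}$ forces the normalized counts for $Y$ to converge to $1$ at least as fast as those for $X$, so $\rho_e(Y)=1$. Since Lemma~\ref{C} provides an exponentially generic set $\mathcal C\subseteq\W k$, it therefore suffices to prove the two containments $\mathcal C\subseteq\{\vec w:\overline w_1,\dots,\overline w_k \text{ freely generate}\}$ and $\mathcal C\subseteq\{\vec w:\Comp\ge\frac{1-\theta}{\theta}-\e\}$. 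The entire argument thus reduces to reading off the geometric consequences of the defining conditions of $\mathcal C$, which (i) bound the $G$-length of each $\overline w_i$ from below and above in terms of $\abs{w_i}$, and (ii) impose a small-cancellation/bounded-overlap condition on $\{\overline w_i^{\pm1}\}$ ensuring that a reduced product in the $\overline w_i$ is represented by a concatenation of long quasigeodesic pieces meeting with uniformly bounded backtracking.

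For the free-generation statement~\ref{hyp1} I would fix $\vec w\in\mathcal C$ and a nonempty reduced word $\overline w_{i_1}^{\e_1}\cdots\overline w_{i_s}^{\e_s}$. The bounded-overlap condition says the geodesic pieces fellow-travel only near their common endpoints, so their concatenation is a local quasigeodesic; by the local-to-global principle in a $\delta$-hyperbolic space (a local quasigeodesic whose pieces exceed a threshold depending only on $\delta$ and the overlap bound is a global quasigeodesic) the concatenated path is a global quasigeodesic, in particular a non-closed path. Hence the product is nontrivial, there are no relations among $\overline w_1,\dots,\overline w_k$, and they freely generate a free group $H$ of rank $k$. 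This is exactly membership of $\vec w$ in the set of statement~\ref{hyp1}.

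For statement~\ref{hyp2} the same global quasigeodesic, applied to an arbitrary $h=\overline w_{i_1}^{\e_1}\cdots\overline w_{i_s}^{\e_s}\in H$, bounds $\abs h_G$ below by a fixed fraction of the total spelling length $N=\sum_j\abs{w_{i_j}}$, which already yields positivity of the compression factor and hence quasi-isometric embedding. To reach the sharp exponent I would feed in the two cogrowth estimates underlying Lemma~\ref{C}. The lower estimate, generic over $\mathcal C$, is $\abs{\overline w_i}_G\ge(1-\theta-\e)\abs{w_i}$; it follows by counting, through the return inequality $\abs{\{w\in W_n:\overline w=g\}}\le\abs{V_{2n}}^{1/2}$, that only exponentially few length-$n$ words have image in a ball of radius below $(1-\theta)n$. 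Combined with the non-cancellation of the pieces this upgrades to $\abs h_G\ge(1-\theta-\e)N$ for every $h$. The complementary estimate $\abs{\overline w_i}_G\le(\theta+\e)\abs{w_i}$ — exponentially few length-$n$ words are that close to geodesic — gives $\abs h_H=\sum_j\abs{\overline w_{i_j}}_G\le(\theta+\e)N$, since $H$ is free on the $\overline w_i$. Dividing, $\Comp(G,A;H,B)\ge\frac{1-\theta-\e}{\theta+\e}$, which exceeds $\frac{1-\theta}{\theta}-\e$ after rescaling the parameter; as $G$ is non-elementary, hence non-amenable, we have $\theta<1$ (and in fact $\theta\ge\frac12$ from the words $uu\inv$), so the bound is strictly positive and at most $1$, consistent with $\abs h_G\le\abs h_H$.

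The genuine obstacle is Lemma~\ref{C} itself: showing that the conjunction of the two-sided length regularity and the bounded-overlap condition is exponentially generic, and that the surviving pieces are uniformly longer than the $\delta$-dependent threshold so that the local-to-global principle applies simultaneously to all $h\in H$. The delicate point there is the counting that pins the ratio $\abs{\overline w}_G/\abs w$ into the window $[\,1-\theta,\theta\,]$ on an exponentially generic set while the overlap condition fails only on an exponentially negligible set; once both are established, the deductions above — and with them the containments $\mathcal C\subseteq$ (statement~\ref{hyp1}) and $\mathcal C\subseteq$ (statement~\ref{hyp2}) — are essentially formal.
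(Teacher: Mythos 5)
Your overall reduction is exactly the paper's: Lemma~\ref{C} plus monotonicity of exponential genericity, and then two containments. Your treatment of part~(\ref{hyp1}) is also essentially the paper's argument in different clothing: where you invoke a general local-to-global principle for local quasigeodesics, the paper applies its explicit broken-geodesic estimate (Lemma~\ref{length}) to the partial products $\overline{x_1\cdots x_\ell}$, with conditions (2) and (3) of Lemma~\ref{C} (note the $+2\delta$ built into condition (2)) verifying the hypothesis via the Gromov product identity~(\ref{eq:Gromov}). That part is fine modulo constant-checking.

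The genuine gap is in part~(\ref{hyp2}), in both counting estimates you use to reach the sharp constant. First, your lower bound is too weak: the return inequality $\abs{\{w\in I_n:\overline w=g\}}\le \abs{V_{2n}}^{1/2}$ only shows that exponentially few words of length $n$ land in the ball of radius $(1-\theta-\e)n$, i.e.\ it gives the constant $1-\theta$, which for a non-amenable group is \emph{strictly} smaller than the required $\frac{1-\theta}{\theta}$. The paper's Lemma~\ref{amenable} gets the sharp constant by a different count: if $\abs{\overline w}\le r$ then $w$ extends to a relator of length at most $n+r$, so $I_{n,r}\subseteq V_n\cup\cdots\cup V_{n+r}$ and $\abs{I_{n,r}}/\abs{I_n}\lesssim (2m)^{\theta(n+r)-n}$, which is exponentially small precisely when $r<\frac{1-\theta}{\theta}\,n$. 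Second, and fatally, the upper bound you invoke to compensate --- that generically $\abs{\overline w}_G\le(\theta+\e)\abs{w}$ --- is false in general. Take $G=F_m$ free of rank $m\ge 5$ with standard generators: a random word of length $n$ has $\abs{\overline w}\approx\frac{m-1}{m}n$ with exponential concentration, while $\theta=1+\log_{2m}\frac{\sqrt{2m-1}}{m}$, so the drift $\frac{m-1}{m}\to 1$ while $\theta\to\frac12$ as $m\to\infty$ (already at $m=5$ one has drift $0.8>\theta\approx 0.78$). Thus the set you need is exponentially \emph{negligible}, not generic, and no counting argument can rescue it. The paper avoids this entirely: with the sharp per-piece lower bound $\abs{\overline z}_G\ge t\bigl(\frac{1-\theta}{\theta}-\e\bigr)\abs{\vec w}$ coming out of Lemma~\ref{length}, the \emph{trivial} estimate $\abs{\overline z}_H\le\sum_j\abs{\overline{w_{i_j}}}_G\le t\abs{\vec w}$ already yields compression factor at least $\frac{1-\theta}{\theta}-\e$; no upper bound on the generic geodesic defect is needed. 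You should repair your argument along these lines.
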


It is easy to see that the first statement of Theorem \ref{hyp} also applies to any group with choice of generators $W\to G$ which surjects onto a non-elementary hyperbolic group. Examples of such groups include many relatively hyperbolic groups, e.g., non-elementary groups hyperbolic relative to proper residually finite subgroups \cite{Osi07}. The later class includes fundamental groups of complete finite volume manifolds of pinched negative curvature, $CAT(0)$ groups with isolated flats, groups acting freely on $\mathbb R^n$--trees, and many other examples.

\begin{thm}\label{MP} Let $G$ be a non-elementary hyperbolic group. Then for any  choice of generators $W\to G$ and $k\ge 1$
there exists a partial algorithm $\mathcal A$ which for each  $\vec w = (w_1,\ldots,w_k)$ in an exponentially generic subset $\mathcal D\subset \W k$ and an arbitrary $z\in W$ decides if $\overline z$ is in the subgroup $H=\langle \overline w_1,\ldots, \overline w_k \rangle \subset G$. When the answer is yes, $\mathcal A$ decomposes $\overline z$ as a word in the generators $\overline w_1,\ldots, \overline w_k $ and their inverses. On all inputs $\mathcal A$ runs in time
$O((k|\vec w|+|z|)^3)$.
\end{thm}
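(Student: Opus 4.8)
The plan is to read $\mathcal D$ off the work already done and then to turn the geometric picture it provides into a deterministic greedy decoding. Concretely, I would take $\mathcal D=\mathcal C$, the set of Lemma~\ref{C}, with its parameter fixed at some $\e\in(0,\frac{1-\theta}{\theta})$; this is legitimate because $G$, being non-elementary hyperbolic, is non-amenable, so $\theta<1$ and the interval is nonempty. Since the exponentially generic sets of Theorem~\ref{hyp} all contain $\mathcal C$, every $\vec w\in\mathcal D$ enjoys both conclusions of that theorem: $H=\langle\overline w_1,\dots,\overline w_k\rangle$ is free of rank $k$, and it is quasi-isometrically embedded with compression factor at least $c:=\frac{1-\theta}{\theta}-\e>0$. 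Exponential genericity of $\mathcal D$ is then immediate from Lemma~\ref{C}. The compression bound is the first lever: if $\overline z\in H$ and $\overline z=\overline w_{i_1}^{\epsilon_1}\cdots\overline w_{i_s}^{\epsilon_s}$ is the (unique, since $H$ is free) reduced expression, then, since every generator satisfies $\abs{\overline w_i^{\pm1}}_G\ge1$ and a reduced word in a free group uses the fewest syllables, we get $s\le\abs{\overline z}_H\le\frac1c\abs z$, so the number of syllables to be recovered is linear in $\abs z$.

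The geometric engine is stability of quasi-geodesics in the $\delta$-hyperbolic Cayley graph $\Gamma(G,A)$. The conditions defining $\mathcal C$ should be chosen to supply two uniform facts valid for every $\vec w\in\mathcal D$. First, fellow-traveling: there are constants $\lambda,\kappa$, independent of $\vec w$, such that each freely reduced product of the $\overline w_i^{\pm1}$ labels a $(\lambda,\kappa)$-quasi-geodesic, whence the broken path through the partial products $g_t=\overline w_{i_1}^{\epsilon_1}\cdots\overline w_{i_t}^{\epsilon_t}$ lies within a fixed Hausdorff distance $R=R(\delta,\lambda,\kappa)$ of every geodesic $[1,\overline z]$, and likewise of every geodesic $[g_t,\overline z]$. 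Second, local uniqueness: the small-cancellation content of $\mathcal C$ forces distinct generators $\overline w_i^{\pm1}$ to share only short common prefixes, so at most one of the $2k$ generators can $R$-fellow-travel a sufficiently long initial segment of a given geodesic issuing from $g_t$. Together these say that the true next syllable $\overline w_{i_{t+1}}^{\epsilon_{t+1}}$ is the unique generator whose geodesic matches, up to the fuzz $R$, a long prefix of $[g_t,\overline z]$, so the decomposition can be read left to right with no branching.

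On the algorithmic side I would fix, once and for all, a Dehn presentation and a geodesic automaton for $G$ (these depend only on $G$), giving a linear-time word problem and computation of geodesic words and of $\abs{\cdot}_G$ in at most quadratic time in the length of the word handled. The algorithm $\mathcal A$ first verifies the defining conditions of $\mathcal D$ on the input $\vec w$; if they fail it halts with a failure message. Otherwise it decodes greedily: keeping $g_t$ as a geodesic word (so $\abs{g_t}_G\le\abs z+R$ by fellow-traveling, hence of length $O(k\abs{\vec w}+\abs z)$), it computes a geodesic from $g_t$ toward $\overline z$, selects the unique generator $\overline w_i^{\pm1}$ matching a long enough prefix of it, appends it, and reduces. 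The scan halts successfully when $g_t=\overline z$, outputting \emph{yes} together with the reconstructed word; it halts with \emph{no} if no generator matches or if the syllable count exceeds the bound $\frac1c\abs z$. A concluding word-problem check that the reconstructed product equals $\overline z$ in $G$ guarantees that $\mathcal A$ never lies. There are $O(\abs z)$ stages, each manipulating words of length $O(k\abs{\vec w}+\abs z)$ at quadratic cost, so the running time is $O((k\abs{\vec w}+\abs z)^3)$.

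The crux, and the step I expect to be the main obstacle, is extracting from the statistical statement of Lemma~\ref{C} the two \emph{uniform, deterministic} guarantees used above: that $\lambda,\kappa,R$ and the prefix-overlap threshold may be chosen independently of the particular $\vec w\in\mathcal D$, and that ``no generator matches'' is a genuine certificate of non-membership rather than an artifact of the fellow-traveling slack. Pinning these constants down, and checking that the fellow-traveling estimate really keeps every $g_t$ of length $O(k\abs{\vec w}+\abs z)$ so that each stage costs only a quadratic, is the technical heart; once it is in place the complexity bookkeeping yielding the cubic bound is routine.
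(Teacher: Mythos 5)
Your overall architecture (a generic set read off from Lemma~\ref{C}, a one-time precomputation of geodesic machinery, and a left-to-right greedy decoding) parallels the paper's, but the two steps you yourself flag as the crux are genuine gaps, and one of them is false as stated. The constants $\lambda,\kappa,R$ \emph{cannot} be chosen independently of $\vec w$: a subword $v$ of some $w_i$ with $\abs v<\e\abs{w_i}$ carries no lower bound on $\abs{\overline v}$ from Lemma~\ref{quasi}, so the path labelled by a freely reduced product of the $w_i^{\pm 1}$ is only a quasi-geodesic with additive constant of order $\e\abs{\vec w}$, and the Morse stability constant $R$ then also grows linearly in $\abs{\vec w}$. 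Since the common prefixes permitted by condition (3) of Lemma~\ref{C} may themselves have length up to $\e\abs{\vec w}$, the fuzz and the ambiguity are of the same order, so your ``local uniqueness'' and the soundness of the answer \emph{no} (``no generator matches $\Rightarrow\overline z\notin H$'') require a quantitative comparison of constants that the proposal never supplies; the final word-problem check protects only the \emph{yes} answers. Separately, taking $\mathcal D=\mathcal C$ breaks the very first step of your algorithm: deciding whether $\vec w\in\mathcal C$ requires comparing integer geodesic lengths against the real thresholds $(\frac{1-\theta}{\theta}-\e)\abs{w_i}+2\delta$ and $\e\abs{\vec w}$, where $\theta$ is the gross cogrowth --- a limiting quantity you have no procedure to evaluate. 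So the set your algorithm is supposed to verify is not one it can verify.

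The paper resolves both problems at once with Corollary~\ref{free}. Its set $\mathcal D$ is defined by the finitely many \emph{integer} inequalities $\abs{\overline{w_i}^{\pm 1}-\overline{w_j}^{\pm 1}}>\max\{\abs{\overline{w_i}},\abs{\overline{w_j}}\}+2\delta$, checkable with $O(k^2)$ geodesic-length computations (linear time each by Shapiro's algorithm after precomputation); it contains $\mathcal C$, hence is exponentially generic. Lemma~\ref{length} then gives the exact deterministic consequence your sketch is missing: for $\vec w\in\mathcal D$, the geodesic lengths of the partial products of any freely reduced word in the $\overline w_i$ are \emph{strictly increasing}. This replaces all fellow-traveling considerations by a clean criterion driving Algorithm~\ref{gwp1}: every nontrivial element of $H$ admits a generator whose right multiplication strictly decreases geodesic length, and the greedy loop preserves the left coset $\overline zH$; hence if the loop reaches $1$ then $\overline z\in H$ with the decomposition read off from the outputs, and if it gets stuck at a nontrivial element then $\overline z\notin H$. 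Termination in at most $\abs z$ passes and the cubic bound follow by counting geodesic computations, with no uniform constants, overlap thresholds, or terminal verification needed. The shortest repair of your argument is to abandon the fellow-traveling formulation and prove the monotonicity statement of Corollary~\ref{free} (via Lemma~\ref{length}) for a set defined by computable geodesic-length inequalities.
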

By partial algorithm we mean one which never gives a wrong answer but may say ''Don't know`` or ''Fail``.

\begin{thm}\label{generic}
Let $G$ be a finitely presented amenable group with unsolvable word problem. Then for any choice of generators $W\to G$  the word problem in  $G$  is not solvable on any exponentially generic subset of $W$.
\end{thm}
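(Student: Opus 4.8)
The plan is to argue by contradiction, extracting from a partial solution on a generic set a full solution of the word problem, with amenability entering through the cogrowth identity $\theta=1$. Write $T=\{w\in W:\overline w=1\}$ for the set of words representing the identity, so that $V_r=T\cap\{w:\abs w=r\}$ and $\abs{V_{2n}}$ is exactly the quantity defining $\theta$ in \eqref{eq:cogrowth}. Since $G$ is finitely presented, $T$ is recursively enumerable (enumerate all products of conjugates of the finitely many defining relators), and unsolvability of the word problem says precisely that $T$ is not recursive, i.e. that $T^{c}$ is not recursively enumerable. Now suppose toward a contradiction that the word problem is solvable on some exponentially generic $S\subset W$: there is a partial algorithm $\mathcal A$ which never returns a wrong answer on any input of $W$ and which returns a definite answer (not ``fail'') on every $w\in S$. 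As $S$ is exponentially generic, $S^{c}$ is exponentially negligible, so there are $M>0$ and $\beta\in(0,1)$ with $\abs{S^{c}\cap W_{N}}\le M\beta^{N}\abs{W_{N}}$ for all $N$.

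The main step is a density estimate. Fix $w\in W$, put $g=\overline w$ and $\ell=\abs w$, and consider the set of concatenations $wT=\{wr:r\in T\}$, every member of which represents $g$. Prepending $w$ is injective and $\abs{wr}=\ell+\abs r$, so $\abs{wT\cap W_{\ell+2n}}\ge\abs{V_{2n}}$. Amenability of $G$ gives $\theta=1$, that is $\tfrac{1}{2n}\log_{2m}\abs{V_{2n}}\to 1$; hence for each $c>0$ we have $\abs{V_{2n}}\ge(2m)^{2n(1-c)}$ for all large $n$ (note $V_{2n}\neq\emptyset$, as $(a_{1}a_{1}\inv)^{n}\in V_{2n}$). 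Against this stands $\abs{S^{c}\cap W_{\ell+2n}}\le M\beta^{\ell+2n}\abs{W_{\ell+2n}}$, and since $\abs{W_N}<\tfrac{2m}{2m-1}(2m)^{N}$, taking base-$2m$ logarithms shows that the exponent of the lower bound for $\abs{wT\cap W_{\ell+2n}}$ exceeds that of the upper bound for $\abs{S^{c}\cap W_{\ell+2n}}$ by a quantity growing like $2n\bigl(\log_{2m}(1/\beta)-c\bigr)$. Choosing $c<\log_{2m}(1/\beta)$, this difference tends to $+\infty$, so for large $n$ the fibre $wT$ cannot be contained in $S^{c}$; thus $wT\cap S\neq\emptyset$, i.e. some concatenation $wr$ with $\overline r=1$ lands in $S$.

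Using this I would decide the word problem on all of $W$. Given $w$, run two processes in parallel by dovetailing. Process (a) enumerates $T$ and halts with output ``trivial'' if it ever lists $w$. Process (b) enumerates the words $r\in T$ and dovetails the computations $\mathcal A(wr)$, halting with output ``nontrivial'' as soon as some $\mathcal A(wr)$ returns the definite answer ``nontrivial''. If $\overline w=1$, then (a) eventually lists $w$, while (b) can never fire, because $\mathcal A$ never lies and $\overline{wr}=\overline w=1$. If $\overline w\neq 1$, then (a) never fires, while by the density estimate there is some $wr\in S$ with $\overline{wr}=g\neq 1$, and on such an input the partial algorithm $\mathcal A$ is definite and correct, so it answers ``nontrivial'' and (b) fires. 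In either case the procedure halts with the correct verdict, so the word problem on $W$ is decidable, contradicting the hypothesis on $G$.

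I expect the density estimate of the second paragraph to be the crux, since it is exactly there that amenability is used: the equality $\theta=1$ forces the fibre $wT$ to decay only subexponentially, so it cannot be absorbed by the exponentially thin set $S^{c}$. That amenability is genuinely necessary is visible from the opposite case, where $\theta<1$ makes $T$ itself exponentially negligible and the trivial ``always nontrivial'' algorithm already solves the word problem on the exponentially generic set $T^{c}$. The remaining ingredients---recursive enumerability of $T$ from finite presentability, and the point that a never-lying partial algorithm may be trusted on any definite output irrespective of membership in $S$---are routine, and it is the latter observation that lets the dovetailing go through without ever having to decide membership in $S$.
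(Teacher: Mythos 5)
Your proof is correct and follows essentially the same route as the paper: both arguments dovetail the partial algorithm over the words $wr$ with $\overline r=1$ and use $\theta=1$ to show that this set is too dense to be absorbed by the exponentially negligible complement of the generic set. The only difference is organizational --- you isolate the density estimate ($wT$ meets every exponentially generic set) as a positive lemma and place the contradiction at the algorithmic step, whereas the paper uses unsolvability first to produce a word $w$ with $wT$ contained in the complement of the domain and then contradicts the density bound.
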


As we noted above,~\cite{Kh} provides many groups to which Theorem~\ref{generic} applies.

\section{Preliminaries}

In this section we recall for convenience various known results and draw some elementary consequences. Recall the definitions of $W, W_n, \W k$ and $\W k_n$ from the preceding section.

\subsection{Asymptotic density}

\begin{lem}\label{spherical}
Define $I_n = \{w\in W\mid |w|=n\}$ (the sphere of radius $n$). If $\lim_{n\to\infty}\frac{|X\cap I_n|}{|I_n|} < \alpha^n$ for some $\alpha\in (0,1)$, and all sufficiently large $n$, then $X$ is exponentially negligible.
\end{lem}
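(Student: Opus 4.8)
The plan is to deduce the disk bound defining exponential negligibility from the given spherical bound by summing over spheres and estimating the resulting geometric series. Recall that $\abs{I_k} = (2m)^k$ and that $W_n$ is the disjoint union of $I_0,\ldots,I_n$, so $\abs{W_n} = \sum_{k=0}^n (2m)^k$; in particular $\abs{W_n}\ge (2m)^n$. Likewise $X\cap W_n$ is the disjoint union of the sets $X\cap I_k$ for $0\le k\le n$, whence
\[
\abs{X\cap W_n} = \sum_{k=0}^n \abs{X\cap I_k}.
\]
To prove that $X$ is exponentially negligible it suffices, by the definition of $\rho_e$, to bound $\abs{X\cap W_n}/\abs{W_n}$ by $M\beta^n$ for some $\beta\in(0,1)$, some $M>0$, and all large $n$.

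First I would fix $N_0$ so that the hypothesis gives $\abs{X\cap I_k} < \alpha^k\abs{I_k} = (2m\alpha)^k$ for every $k\ge N_0$, and bound each of the finitely many remaining terms trivially by $\abs{X\cap I_k}\le\abs{I_k} = (2m)^k$. This yields
\[
\abs{X\cap W_n} \le C_0 + \sum_{k=N_0}^n (2m\alpha)^k,
\]
where $C_0 = \sum_{k=0}^{N_0-1}(2m)^k$ is a constant independent of $n$. Dividing by $\abs{W_n}\ge (2m)^n$, the term $C_0/(2m)^n$ decays at the exponential rate $1/(2m)<1$. For the tail I would distinguish three cases according to the sign of $2m\alpha-1$: if $2m\alpha<1$ the tail is bounded by a convergent geometric series, so its contribution again decays like $(2m)^{-n}$; if $2m\alpha>1$ the tail is at most $(2m\alpha)^{n+1}/(2m\alpha-1)$, and dividing by $(2m)^n$ leaves a bound of the form $(\text{constant})\cdot\alpha^n$; in the borderline case $2m\alpha=1$ the tail is at most $n+1$, giving $(n+1)/(2m)^n$, which is still exponentially small. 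Choosing any $\beta$ with $\max\{\alpha,1/(2m)\}<\beta<1$ absorbs all three cases (the linear factor in the borderline case is dominated because $2m\beta>1$), so $\abs{X\cap W_n}/\abs{W_n}\le M\beta^n$ and hence $\rho_e(X)=0$.

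I expect the only real subtlety to be the bookkeeping in these geometric estimates: because exponential negligibility demands an estimate with a genuine exponential rate rather than mere convergence to $0$, one must carry out the case analysis on the sign of $2m\alpha-1$ explicitly and confirm that each case produces a bound of the required form $M\beta^n$. Everything else is a routine comparison of sphere and ball volumes in $W$.
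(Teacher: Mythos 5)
Your proof is correct and follows essentially the same route as the paper: decompose the disk $W_n$ into spheres, bound the small-radius spheres trivially and the large ones via the hypothesis, and sum the resulting geometric series. The paper's only wrinkle is to split at radius $n/2$ and to bound $|X\cap I_k|/|W_n|$ by $|X\cap I_k|/|I_k|\le\alpha^k$ for $k>n/2$, which gives $(2m)^{-n/2}+\alpha^{n/2}/(1-\alpha)$ directly and sidesteps your three-way case analysis on the size of $2m\alpha$.
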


\begin{proof}
Let $r$ be the greatest integer less than $n/2$.
\begin{align*}
\frac{|X\cap W_n|}{|W_n|} &\le \frac{|W_r|}{|W_n|} + \frac{|X\cap I_{r+1}| + \cdots + |X\cap I_n|}{|W_n|}\\
&\le (2m)^{-n/2} + \frac{|X\cap I_{r+1|}}{|I_{r+1}|} + \cdots + \frac{|X\cap I_n|}{|I_n|}\\
&\le (2m)^{-n/2} + \alpha^{r+1} + \cdots + \alpha^n\mbox{ for $n$ sufficiently large}\\
&\le (2m)^{-n/2} + \frac{\alpha^{n/2}}{1-\alpha}
\end{align*}
\end{proof}
Concatenation of all entries of $\vec w = (w_1,\ldots, w_k)\in \W k$ defines a map $\pi:\W k\to W$. It is easy to see that that  $\pi(\W k_n)= W_{nk}$ whence $|\W k_n|\ge |W_{nk}|$. The $k$-tuples in $\pi\inv(w)$ correspond to ordered partitions $\ell_1+\cdots+\ell_k=|w|$ with $0\le \ell_i\le |w|$. There are at most $(|w|+1)^k$ such partitions, and it follows that the restriction of $\pi$ to $\W k_n$ is at most $(nk+1)^k$ to~$1$. These conclusions still apply if we pick a fixed sequence of exponents $e_1,\ldots, e_k$ with $e_i=\pm 1$ and define $\pi(\vec w)= w_1^{e_1}\cdots w_k^{e_k}$.

\begin{lem}\label{projection} Define $\pi:\W k\to W$ by $\pi(\vec w) =  w_1^{e_1}\cdots w_k^{e_k}
$ as above. If $\pi(X)$ is exponentially negligible, then so is $X$.
\end{lem}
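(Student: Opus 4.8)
The plan is to transport the exponential density bound on $\pi(X)$ back to $X$ by using the two combinatorial facts established immediately above the statement: the restriction of $\pi$ to $\W k_n$ is at most $(nk+1)^k$-to-one, its image lies in $W_{nk}$, and $|\W k_n|\ge |W_{nk}|$. In other words, although $\pi$ is far from injective, its non-injectivity is only polynomially large on each disk, and this is exactly the slack that an exponential estimate can afford.

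First I would record the containment $\pi(X\cap \W k_n)\subseteq \pi(X)\cap W_{nk}$. Indeed, if $\vec w\in X\cap \W k_n$ then each $\abs{w_i}\le n$, so $\abs{\pi(\vec w)}=\abs{w_1^{e_1}\cdots w_k^{e_k}}\le \abs{w_1}+\cdots+\abs{w_k}\le nk$ (there is no cancellation in the free monoid $W$), and of course $\pi(\vec w)\in\pi(X)$. Combining this containment with the fiber bound and summing over the image gives
\[
|X\cap \W k_n|\;\le\;(nk+1)^k\,|\pi(X\cap \W k_n)|\;\le\;(nk+1)^k\,|\pi(X)\cap W_{nk}|.
\]
Dividing by $|\W k_n|$ and using $|\W k_n|\ge |W_{nk}|$ then yields
\[
\frac{|X\cap \W k_n|}{|\W k_n|}\;\le\;(nk+1)^k\,\frac{|\pi(X)\cap W_{nk}|}{|W_{nk}|}.
\]

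Next I would invoke the hypothesis that $\pi(X)$ is exponentially negligible, in the equivalent form $\dfrac{|\pi(X)\cap W_N|}{|W_N|}\le M\beta^N$ for some $M>0$, some $\beta\in(0,1)$, and all $N$. Specializing to $N=nk$ converts the previous estimate into
\[
\frac{|X\cap \W k_n|}{|\W k_n|}\;\le\;M\,(nk+1)^k\,\beta^{nk}\;=\;M\,(nk+1)^k\,(\beta^k)^n .
\]
The final step is to absorb the polynomial prefactor. Since $\beta^k\in(0,1)$, for any fixed $\gamma$ with $\beta^k<\gamma<1$ the quantity $(nk+1)^k(\beta^k/\gamma)^n$ tends to $0$ and is therefore bounded by some constant $C$; hence the right-hand side is at most $MC\,\gamma^n$ for all $n$. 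This exhibits a bound of the form $M'\gamma^n$ with $\gamma\in(0,1)$, so $\rho_e(X)=0$ and $X$ is exponentially negligible.

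The argument is essentially bookkeeping, and I do not expect a genuine obstacle. The only point that requires a moment's care is the last one: one must check that multiplying an exponentially small quantity by the polynomial factor $(nk+1)^k$ arising from the non-injectivity of $\pi$ leaves it exponentially small. This is harmless precisely because $k$ is fixed and polynomial growth is dominated by any honest exponential decay, which is why the definition of $\rho_e$ in terms of a rate $\beta^n$ (rather than a bare limit) is exactly what makes the reduction go through.
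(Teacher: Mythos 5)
Your proposal is correct and follows essentially the same route as the paper: both use the containment $\pi(X\cap \W k_n)\subseteq \pi(X)\cap W_{kn}$, the $(nk+1)^k$-to-one fiber bound, and the inequality $|\W k_n|\ge |W_{kn}|$ to get $\frac{|X\cap \W k_n|}{|\W k_n|}\le (nk+1)^k\,\frac{|\pi(X)\cap W_{kn}|}{|W_{kn}|}$, and then absorb the polynomial factor into the exponential decay. Your last paragraph simply spells out the step the paper dismisses as ``a straightforward argument.''
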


\begin{proof}
If $\pi(X)$ is exponentially negligible, then $\frac{|\pi(X)\cap W_{kn}|}{|W_{kn}|}\le \alpha^n$ for some $\alpha\in (0,1)$ and all sufficiently large $n$. Thus
\[\frac{|X\cap \W k_n|}{|\W k_n|} \le \frac{(nk+1)^k|\pi(X)\cap W_{kn}|}{|W_{kn}|} \le (nk+1)^k\alpha^n \]
and a straightforward argument shows that $X$ is exponentially negligible.
\end{proof}

\subsection{Amenable groups}
\label{subsec:amenable}

Let $W \to G$ be a choice of generators for a group $G$.  Define $V$ to be the subset of all words in $W$ which map to $1$ in $G$, and $V_n=V\cap I_n$ is the set of all words of length $n$ in $V$.

By~\cite{Grigorchuk1,Grigorchuk2} (see also \cite{Cohen} and \cite{Kes2}) $G$ is {\em amenable} if and only if
\[\limsup_{n\to\infty} (\abs{V_n}/\abs{I_n})^{1/n} = 1.
\]
Clearly  $|V_{n+p}|\ge |V_n||V_p|$, and $V_{2n}$ includes all concatentations of $n$ terms of the form $a_ia_i^{-1}$ or $a_i^{-1}a_i$. It follows that $|V_{2n}|\ge (2m)^n$; and if $|V_n|=0$, then $|V_{n-2}|=0$. Thus $|V_n|$ is positive for all even $n$ and either positive for all odd $n$ greater than than some bound $M$ or $0$ for all odd $n$.

In first case let $t=ks+r$ with $s > M$ and $M < r \le M+s$. Then $|V_t|\ge |V_s|^k|V_r|$ implies $|V_t|^{1/t}\ge |V_s|^{1/s}(|V_s|^{-r}|V_r|)^{1/t}$ whence $\liminf_{t\to\infty}|V_t|^{1/t}\ge |V_s|^{1/s}$. It follows that $\liminf_{t\to\infty}|V_t|^{1/t}\ge \limsup_{s\to\infty}|V_s|^{1/s}$, which in turn implies that $\lim_{n\to\infty}|V_n|^{1/n}$ exists. Likewise in the second case $\lim_{n\to\infty}|V_{2n}|^{1/(2n)}$ exists. Thus we may define
\begin{equation}\label{eq:spectralradius}
\lambda=\lim_{n\to\infty}(|V_{2n}|/|I_{2n}|)^{\frac{1}{2n}} = \frac{1}{2m}\lim_{n\to\infty}|V_{2n}|^\frac{1}{2n} = \frac{1}{2m}\limsup_{n\to\infty}|V_n|^{1/n}.
\end{equation}
$|V_{2n}|\ge (2m)^n$ implies $1 \ge \lambda \ge 1/\sqrt{2m}$. Comparison of~(\ref{eq:spectralradius}) with~(\ref{eq:cogrowth}) yields
\begin{equation}\label{eq:cogrowth2}
\theta=1+\log_{2m}\lambda = \limsup_{n\to \infty}\frac{1}{n} \log_{2m} |V_n|
\end{equation}
whence
\begin{equation}\label{eq:half}
1\ge \theta \ge 1/2.
\end{equation}
Thus amenability is equivalent to both $\lambda=1$ and $\theta=1$.

Also it follows from~\cite[Corollary 1, page 343]{Kes1} that every subgroup of an amenable group is amenable. Conversely a group which contains a non-amenable subgroup is itself nonamenable.

\begin{lem}\label{amenable} If $G$ is non-amenable, then for any $\epsilon > 0$ and  constant $K$, $U= \{w\in W \mid \abs{\overline w} > (\frac{1-\theta}{\theta}-\epsilon)\abs w  + K\}$ is exponentially generic.
\end{lem}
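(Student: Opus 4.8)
The plan is to prove the equivalent statement that the complement
\[ U^c = \{w \in W \mid \abs{\overline w}_G \le c\abs w + K\}, \qquad c = \tfrac{1-\theta}{\theta} - \epsilon, \]
is exponentially negligible. By Lemma~\ref{spherical} it suffices to show that the spherical ratio $\abs{U^c \cap I_n}/\abs{I_n}$ decays exponentially in $n$. I first dispose of the degenerate case: if $c < 0$ then $c\abs w + K < 0$ for every $w \in I_n$ once $n$ is large, so $U^c \cap I_n = \emptyset$ and there is nothing to prove. Hence I may assume $c \ge 0$, and then $1 + c = 1/\theta - \epsilon \ge 1$.

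The heart of the argument is a counting injection that turns a word with short image into a relator. Fix, for each $g \in G$, a geodesic word $\gamma(g) \in W$ with $\overline{\gamma(g)} = g$ and $\abs{\gamma(g)} = \abs{g}_G$. Given $w \in I_n$ with $\abs{\overline w}_G = \ell$, set $\phi(w) = w\,\gamma(\overline w\inv)$, the concatenation in the free monoid $W$. Then $\abs{\phi(w)} = n + \ell$ and $\overline{\phi(w)} = \overline w\,\overline w\inv = 1$, so $\phi(w) \in V_{n+\ell}$. Since the first $n$ letters of $\phi(w)$ recover $w$, the map $\phi$ is injective on $\{w \in I_n \mid \abs{\overline w}_G = \ell\}$, whence
\[ \abs{\{w \in I_n \mid \abs{\overline w}_G = \ell\}} \le \abs{V_{n+\ell}}. \]

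I would then feed in the cogrowth estimate. From the expression $\theta = \limsup_{n} \frac1n \log_{2m}\abs{V_n}$ in~(\ref{eq:cogrowth2}), for every $\delta > 0$ there is an $N$ with $\abs{V_r} \le (2m)^{(\theta+\delta)r}$ for all $r \ge N$. Summing the previous bound over the $O(n)$ values $0 \le \ell \le cn + K$ (each $r = n+\ell \ge n \ge N$, and using monotonicity of $(2m)^{(\theta+\delta)r}$) and dividing by $\abs{I_n} = (2m)^n$ gives, for large $n$,
\[ \frac{\abs{U^c \cap I_n}}{\abs{I_n}} \le (cn + K + 1)\,(2m)^{(\theta+\delta)K}\,(2m)^{[(\theta+\delta)(1+c)-1]\,n}. \]
The whole point is the algebraic identity $\theta(1+c) = \theta(1/\theta - \epsilon) = 1 - \theta\epsilon$, so the exponent coefficient equals $\delta(1+c) - \theta\epsilon$. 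Since $\theta \ge 1/2 > 0$ and $\epsilon > 0$, this is strictly negative as soon as $\delta < \theta\epsilon/(1+c)$; fixing such a $\delta$, the right-hand side is a polynomial in $n$ times $(2m)^{-\eta n}$ with $\eta = \theta\epsilon - \delta(1+c) > 0$, hence at most $\alpha^n$ (take $\alpha = (2m)^{-\eta/2} \in (0,1)$) for $n$ large. Lemma~\ref{spherical} then yields that $U^c$ is exponentially negligible, i.e.\ that $U$ is exponentially generic.

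I do not anticipate a serious obstacle beyond bookkeeping: the argument reduces to the injection above together with the $\limsup$ estimate. The one conceptual point worth flagging is where non-amenability enters, since the estimate itself only uses $\theta > 0$ and $\epsilon > 0$. Non-amenability is precisely the condition $\theta < 1$, which makes $\tfrac{1-\theta}{\theta} > 0$ and hence $c$ positive for small $\epsilon$; this is what renders the conclusion non-vacuous (for amenable $G$ one has $\theta = 1$, $c = -\epsilon < 0$, and $U^c \cap I_n$ is eventually empty). The remaining care points are the edge case $c < 0$ and ensuring the threshold $N$ is applied only to indices $r = n + \ell \ge n$.
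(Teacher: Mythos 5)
Your proof is correct and follows the same route as the paper's: reduce to spherical density via Lemma~\ref{spherical}, inject words with short image into the relator sets $V_{n+\ell}$ by appending a geodesic word for the inverse, and invoke the cogrowth bound $\abs{V_r}\le (2m)^{(\theta+\delta)r}$ for large $r$. The cosmetic differences are that you absorb the constant $K$ directly into the summation range $0\le\ell\le cn+K$, while the paper treats $K=0$ first and then derives the general case by shrinking $\epsilon$, and that you dispose of the degenerate case $c<0$ explicitly.

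There is, however, one substantive difference, and it is in your favor. You keep the slack $\delta$ in the cogrowth estimate as a free parameter and compute the exponent coefficient as $(\theta+\delta)(1+c)-1=\delta(1+c)-\theta\epsilon$, which forces the choice $\delta<\theta\epsilon/(1+c)$. The paper instead takes $\delta=\epsilon$ (via $\rho=(2m)^{\theta+\epsilon}$) and asserts that $(\theta+\epsilon)\bigl(\frac{1-\theta}{\theta}-\epsilon+1\bigr)-1=-\epsilon^2$; in fact this quantity equals $\epsilon\bigl(\frac{1}{\theta}-\theta\bigr)-\epsilon^2$, which is strictly positive for small $\epsilon$ precisely in the non-amenable case $\theta<1$ (the claimed identity holds only when $\theta=1$). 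So the paper's estimate at that step does not close as written, and the argument needs exactly the adjustment you make: the $\epsilon$-slack in the cogrowth bound must be replaced by a sufficiently small $\delta$ depending on $\theta$ and $\epsilon$. Your version is the correct one; no gaps.
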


\begin{proof} First suppose $K=0$. Choose $\e>0$ and let $\rho = (2m)^{\theta + \e}$. As $m\ge 2$, (\ref{eq:half}) implies $\rho\ge 2$. If $w\in I_{n,r} =\{w\in I_n \mid \abs{\overline w} \le r\}$, then $ww'\in V_{n+s}$ for some $w'$ of length $s\le r$. Thus $I_{n,r} \subset V_n\cup\cdots\cup V_{n+r}$. For $n$ sufficiently large, (\ref{eq:cogrowth2}) yields
\begin{align*}
\abs{I_{n,r}} &\le \rho^n+\cdots +\rho^{n+r}\\
&=\rho^n\,\frac{\rho^{r+1}-1}{\rho - 1}\\
&\le \rho^{n+r}\frac{\rho}{\rho -1}\\
&\le 2\rho^{n+r}.
\end{align*}
Consequently $\frac{\abs{I_{n,r}}}{\abs{I_n}} \le 2(2m)^{(\theta+\e)(r+n) - n}$. If $r\le (\frac{1-\theta}{\theta}-\e)n$ then $(\theta+\e)((\frac{1-\theta}{\theta}-\e)+1) - 1 = -\e^2$ implies $\frac{\abs{I_{n,r}}}{\abs{I_n}} \le 2(2m)^{-\e^2}$ whence the complement of $U$ is exponentially negligible by  Lemma~\ref{spherical}.

Now suppose $K>0$. For any $\e>0$, $U'= \{w\in W \mid \abs{\overline w} \ge (\frac{1-\theta}{\theta}-\epsilon/2)\abs w\}$ is exponentially generic. But $w\in U'$ implies
\begin{align*}
\abs{\overline w} &\ge (\frac{1-\theta}{\theta}-\epsilon/2)\abs w\\
&\ge  (\frac{1-\theta}{\theta}-\epsilon)\abs w + \e/2|w|\\
&\ge  (\frac{1-\theta}{\theta}-\epsilon)\abs w + K
\end{align*}
for $|w|$ sufficiently large. Thus $U$ contains a co-finite subset of $U'$.
\end{proof}

\begin{lem}\label{quasi}
If $G$ is non-amenable, then for any $\e > 0$
\begin{enumerate}
\item\label{part1} The set of words $w$ with $\abs {\overline v} \ge (\frac{1-\theta}{\theta} -\e)\abs v$ for all subwords $v$ of $w$ with $\abs v \ge \e \abs w$ is exponentially generic;
\item\label{part2} The set of words $w$ with $\abs {\overline v} \ge (\frac{1-\theta}{\theta} -\e)\abs v$ for all subwords $v$ of $ww$ with $\abs{w} \ge \abs v \ge \e \abs w$ is exponentially generic.
\end{enumerate}
\end{lem}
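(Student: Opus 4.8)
The plan is to derive both statements from a single union bound over the polynomially many possible shapes of a subword, feeding into the spherical estimate that is established inside the proof of Lemma~\ref{amenable}. The basic observation is that a word chosen uniformly from the sphere $I_n$ has independent, uniformly distributed letters; hence for any fixed choice of a starting position and a length $\ell$, the contiguous factor $v$ that these cut out of $w$ is itself distributed uniformly over $I_\ell$, and the remaining letters of $w$ are irrelevant.

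For statement~\eqref{part1} I would bound the spherical density of the bad set $B_n=\{w\in I_n\mid \text{some subword } v \text{ of } w \text{ has } \abs v\ge \e n \text{ and } \abs{\overline v}<(\tfrac{1-\theta}{\theta}-\e)\abs v\}$. Fix a window of length $\ell\ge\e n$; by the observation above the induced factor $v$ is uniform on $I_\ell$, so the probability that it is bad is at most $\abs{I_{\ell,r}}/\abs{I_\ell}$ for a suitable $r\le(\tfrac{1-\theta}{\theta}-\e)\ell$, which the proof of Lemma~\ref{amenable} bounds by $2(2m)^{-\e^2\ell}\le 2(2m)^{-\e^3 n}$, using $\ell\ge\e n$. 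There are at most $n^2$ windows, so a union bound gives $\abs{B_n}/\abs{I_n}\le 2n^2(2m)^{-\e^3 n}$. This is smaller than $\alpha^n$ for any fixed $\alpha\in((2m)^{-\e^3},1)$ once $n$ is large, so $B_n$ is exponentially negligible by Lemma~\ref{spherical}, and its complement, the set in~\eqref{part1}, is exponentially generic.

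For statement~\eqref{part2} the only new ingredient is to check that a factor of $ww$ of length $\ell\le\abs w=n$ is again uniform on $I_\ell$. A window lying inside a single copy of $w$ is exactly the case treated above. A window straddling the junction has the form $w_i\cdots w_n w_1\cdots w_j$, and its length $(n-i+1)+j=\ell\le n$ forces $j<i$; it therefore reads off $\ell$ pairwise distinct letter-positions of $w$, which are independent and uniform, so $v$ is again uniform on $I_\ell$. With this in hand the identical estimate applies, now with at most $2n^2$ windows in the length-$2n$ word $ww$, and Lemma~\ref{spherical} yields the claim. The one step demanding care is precisely this uniform-distribution reduction --- in particular the disjointness of positions in the straddling case, which is exactly what the hypothesis $\abs v\le\abs w$ guarantees; the rest is the routine union bound together with the exponential spherical bound already available from the proof of Lemma~\ref{amenable}.
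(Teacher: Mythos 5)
Your proof is correct and follows essentially the same route as the paper's: the paper also fixes each window $(i,j)$ with $j-i+1\ge\e n$, invokes the spherical bound from the proof of Lemma~\ref{amenable} for the factor filling that window, notes that each such factor extends to $w\in I_n$ (respectively to $ww$) in $(2m)^{n-\abs v}$ ways --- which is exactly your uniformity-of-the-factor observation in counting language --- and finishes with a union bound over the $O(n^2)$ windows and Lemma~\ref{spherical}. Your remark that $\abs v\le\abs w$ forces the straddling window to read pairwise distinct positions is precisely the paper's closing point that this hypothesis guarantees the $(2m)^{n-\abs v}$ extension count for part~\eqref{part2}.
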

\begin{proof}
Let $\rho=\frac{1-\theta}{\theta} -\e$.
The words in $I_n$ are obtained by filling a sequence of $n$ locations $\ell_1 \ldots, \ell_n$ with letters from $A$ in all possible ways. Fix $i$ and $j$ with $j-i+1\ge \e n$. It follows from the proof of Lemma~\ref{amenable} that for some $\alpha\in (0,1)$ and $n$ sufficiently large, the fraction of ways of filling the subsequence $\ell_i,\ldots, \ell_j$ with a word $v$ such that $\abs {\overline v} < \rho \abs v = \rho(j-i+1)$ is less than $\alpha^{|v|}$. Since each $v$ extends to $w\in I_n$ in $(2m)^{n-|v|}$ ways, $\alpha^{\abs v}$ also bounds the fraction of extensions which fail the condition at the subword $v$. There are $n^2$ choices of $i,j$, so we conclude that the fraction of words $w\in I_n$ which fail is at most $n^2\alpha^{\e n}$. Thus the first assertion holds by Lemma~\ref{spherical}. The second is proved similarly by counting the number of extensions of $v$ to $ww$. The condition $|w| \ge \abs v$ insures that $v$ extends to a word of the form $ww$ in $(2m)^{(n-\abs v)}$ ways.
\end{proof}

\subsection{Hyperbolic metric spaces}

Recall that a metric space $M$ is {\em geodesic} if distances between points are realized by geodesics, and a geodesic metric space is $\delta$-{\em hyperbolic} for some $\delta \ge 0$ (or simply {\em hyperbolic}) if any geodesic triangle $T$ in $M$ is {\em $\delta $-thin}. That is, each side of $T$ belongs to the union of the closed $\delta $--neighborhoods of the other two sides \cite{Gro}.

We denote  a geodesic path in $M$ from $p$ to $q$ by $[p,q]$ and  its length by  $\abs{p-q}$. The next lemma is well-known (see, e.g.,~\cite{GdH}).

\begin{lem}\label{metricspace}
\begin{enumerate}
\item For any geodesic quadrilateral with vertices $p,q,r,s$,
\[\abs{p-s} + \abs{q-r}\le 2\delta + \max\{\abs{p-q}+\abs{r-s}, \abs{p-r}+\abs{q-s}\}.
\]
\item For any geodesic triangle $T$ with vertices $p,q,r$, there are points $t_p, t_q, t_r$ on the sides opposite $p$, $q$, $r$ respectively such that
\begin{enumerate}
\item $t_p, t_q, t_r$ are a distance at most $2\delta$ from each other;
\item $\abs{p-t_q} = \abs{p-t_r}$, and likewise for the other vertices;
\item Points lying an equal distance from $p$ along the segments of the sides of $T$ from $p$ to $t_q$ and $p$ to $t_r$ are a distance at most $2\delta$ from each other. Similar statements hold for the other vertices.
\end{enumerate}
\end{enumerate}
\end{lem}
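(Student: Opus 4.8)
The plan is to derive both statements directly from the defining $\delta$-thinness, via the standard comparison of a thin triangle with its ``tripod''. I would prove the triangle statement~(2) first, since~(1) then follows from it by cutting the quadrilateral along a diagonal, and since the existence and fellow-travelling of the internal points is the real content of the lemma.

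For~(2), I would introduce the three Gromov products $a=(q\mid r)_p=\tfrac12(\abs{p-q}+\abs{p-r}-\abs{q-r})$, $b=(p\mid r)_q$ and $c=(p\mid q)_r$, which satisfy $a+b=\abs{p-q}$, $b+c=\abs{q-r}$ and $a+c=\abs{p-r}$. I then take $t_r$ to be the point of $[p,q]$ at distance $a$ from $p$, $t_q$ the point of $[p,r]$ at distance $a$ from $p$, and $t_p$ the point of $[q,r]$ at distance $b$ from $q$. With these choices property~(b) holds by construction, since e.g.\ $\abs{p-t_q}=a=\abs{p-t_r}$. The heart of the matter is~(c): I claim that for $0\le u\le a$ the two points at arclength $u$ from $p$ along the legs $[p,q]$ and $[p,r]$ are within $2\delta$ of each other. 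To see this I would use $\delta$-thinness of $T$ to place the point of $[p,q]$ at arclength $u$ within $\delta$ of a point $w$ on $[p,r]\cup[q,r]$; a triangle-inequality estimate shows that for $u<a$ the point $w$ cannot lie on $[q,r]$ (this is exactly what the Gromov product $a$ measures, namely how far along $[p,q]$ one travels before the fellow-traveller jumps from $[p,r]$ to $[q,r]$), so $w\in[p,r]$, and comparing arclengths from $p$ on the geodesic $[p,r]$ upgrades the $\delta$-closeness of $w$ to a $2\delta$ bound between the two equal-arclength points. Property~(a) is then a formal consequence of~(c): $t_q$ and $t_r$ are the points at distance $a$ from $p$ on the two legs at $p$, hence within $2\delta$; applying the same statement at the vertices $q$ and $r$ matches $t_p$ with $t_r$ and $t_p$ with $t_q$, so the three internal points are pairwise within $2\delta$.

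For~(1), I would draw the diagonal $[p,r]$, splitting the quadrilateral into the geodesic triangles $pqr$ and $prs$, and apply the internal-point data of~(2) to each. Writing $\abs{q-r}$ and $\abs{p-s}$ in terms of the Gromov products of the two triangles, and locating on the shared diagonal $[p,r]$ the two internal points coming from $q$ and from $s$, one reads off that the ordering of these two points on $[p,r]$ determines which of the two competing sums $\abs{p-q}+\abs{r-s}$ or $\abs{p-r}+\abs{q-s}$ dominates; in either case the excess of $\abs{p-s}+\abs{q-r}$ over the dominant sum is absorbed by the $2\delta$ obtained from applying thinness once in each of the two triangles.

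I expect the main obstacle to be the fellow-travelling estimate underlying~(c): one must rule out that the $\delta$-close point $w$ lies on the ``wrong'' side, and then convert the $\delta$-closeness of points into a bound between points at \emph{equal} arclength, which requires the near-point comparison along the geodesic $[p,r]$ together with careful constant-tracking to land on exactly $2\delta$. The bookkeeping in~(1) that turns the two diagonal internal points into the maximum on the right-hand side is routine once~(2) is in hand.
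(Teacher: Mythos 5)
You should know at the outset that the paper offers no proof of this lemma at all --- it is stated as ``well-known'' with a citation to [GdH] --- so your proposal must stand on its own, and as written it has a genuine gap at the crux of (2)(c). Your exclusion claim --- ``for $u<a$ the $\delta$-close point $w$ cannot lie on $[q,r]$'' --- is not what the triangle-inequality estimate delivers. If $w\in[q,r]$ with $|x-w|\le\delta$, then adding $|p-w|\ge|p-q|-|q-w|$ and $|p-w|\ge|p-r|-|r-w|$ gives $2|p-w|\ge|p-q|+|p-r|-|q-r|=2a$, hence $u\ge|p-w|-\delta\ge a-\delta$, not $u\ge a$. So the dichotomy fails exactly on the terminal window $a-\delta< u\le a$ --- and that window contains the internal points themselves, i.e.\ precisely the value $u=a$ that you need in order to deduce (a) from (c). The constant has no slack there: in the $\delta$-slim graph consisting of a circle of circumference $6\delta$ with three legs attached at equally spaced points, the equal-arclength points at $u=a$ are exactly $2\delta$ apart and sit exactly at distance $\delta$ from each of the other two sides, so any loosening of the dichotomy destroys the bound. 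The natural repairs do not recover $2\delta$: retreating to the parameter $a-\delta$ (where your Case-A argument does apply) and using that $u\mapsto|x_u-y_u|$ is $2$-Lipschitz gives $2\delta+2(u-(a-\delta))\le 4\delta$; handling the bad case by locating both fellow-travellers $w,w'$ on $[q,r]$ and bounding $|w-w'|$ gives $4\delta$ or worse. As sketched, your argument proves a $4\delta$ version of (c), and a further idea is needed in the window.

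The same factor-of-two problem infects your derivation of (1) from (2). Cutting along $[p,r]$ and setting $\alpha=(q|r)_p$, $\alpha'=(r|s)_p$, one computes $\bigl(|p-s|+|q-r|\bigr)-\bigl(|p-q|+|r-s|\bigr)=2(\alpha'-\alpha)$, so when $\alpha'\le\alpha$ the inequality holds with no error at all; but when $\alpha'>\alpha$ you must show $|p-s|+|q-r|\le|p-r|+|q-s|+2\delta$, and the natural route --- lower-bounding $|q-s|$ by passing near the point of the diagonal $[p,r]$ at distance $\alpha$ from $p$ --- applies the $2\delta$ fellow-travelling of (c) once in \emph{each} of the two triangles, costing $2\delta+2\delta=4\delta$. (Equivalently: $2\delta$-thinness yields the Gromov product inequality with defect $2\delta$, which is the sum form of (1) with constant $4\delta$.) So ``the excess is absorbed by the $2\delta$ obtained from applying thinness once in each of the two triangles'' is exactly the step that does not close: the two per-triangle errors add rather than merge. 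Your architecture (tripod comparison for (2), diagonal cut for (1)) is the standard and correct shape, but both constant-critical steps, as described, land at $4\delta$ rather than the stated $2\delta$; obtaining $2\delta$ requires a sharper argument run directly from $\delta$-slimness, which is presumably why the paper defers to the literature.
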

\noindent
The quantity $\abs{p-t_q} = \abs{p-t_r}$ is the {\em Gromov product} of $q$ and $r$ with respect to $p$, usually written  $(q|r)_p$. It is not hard to show that
\begin{equation}\label{eq:Gromov}
(q|r)_p=\frac12(\abs{p-q}+\abs{p-r}-\abs{q-r}).
\end{equation}
 Thus $(q|r)_p$ is independent of the choice of geodesics forming the sides of  a triangle with vertices $p,q,r$.

The following lemma improves~\cite[Theorem 16 in Chapter 5]{GdH}.
\begin{lem}\label{length}
If for some $\kappa > 0$ and $n\ge 2$, the points $p_0,\ldots, p_n$ satisfy
\begin{equation}
\abs{p_i - p_{i+2}}\ge \kappa+2\delta+\max\{\abs{p_i -p_{i+1}}, \abs{p_{i+i} -p_{i+2}}\}\label{e1}
\end{equation}
then
\[\abs{p_0-p_n}\ge \abs{p_0-p_{n-1}} + \kappa \ge \abs{p_0-p_{1}} + (n-1)\kappa \ge \kappa n. \]
\end{lem}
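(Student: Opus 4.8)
The plan is to establish the first inequality $\abs{p_0-p_n}\ge \abs{p_0-p_{n-1}}+\kappa$ by induction on $n$; the remaining two inequalities then follow by iteration and one direct estimate. For the base case $n=2$, hypothesis~(\ref{e1}) with $i=0$ gives $\abs{p_0-p_2}\ge \kappa+2\delta+\max\{\abs{p_0-p_1},\abs{p_1-p_2}\}\ge \abs{p_0-p_1}+\kappa$ at once. Since the subpath $p_0,\ldots,p_{n-1}$ inherits hypothesis~(\ref{e1}), the inductive hypothesis supplies $\abs{p_0-p_{n-1}}\ge \abs{p_0-p_{n-2}}+\kappa$, which is exactly the fact needed to run the inductive step.

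For the step itself I would apply the quadrilateral inequality of Lemma~\ref{metricspace}(1) to the four points $p_0,p_{n-2},p_{n-1},p_n$, oriented so that $\abs{p_0-p_n}$ appears inside the maximum on the right. Taking $p=p_0$, $q=p_{n-2}$, $r=p_n$, $s=p_{n-1}$ yields
\[
\abs{p_0-p_{n-1}}+\abs{p_{n-2}-p_n}\le 2\delta+\max\{\abs{p_0-p_{n-2}}+\abs{p_{n-1}-p_n},\,\abs{p_0-p_n}+\abs{p_{n-2}-p_{n-1}}\}.
\]

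The crux is a case analysis on which term realizes this maximum. If the first term did, then substituting the lower bound $\abs{p_{n-2}-p_n}\ge \kappa+2\delta+\abs{p_{n-1}-p_n}$ from~(\ref{e1}) with $i=n-2$ would force $\abs{p_0-p_{n-1}}\le \abs{p_0-p_{n-2}}-\kappa$, contradicting the inductive hypothesis since $\kappa>0$. Hence the maximum is realized by the second term, and inserting the other consequence of~(\ref{e1}), namely $\abs{p_{n-2}-p_n}\ge \kappa+2\delta+\abs{p_{n-2}-p_{n-1}}$, gives precisely $\abs{p_0-p_n}\ge \abs{p_0-p_{n-1}}+\kappa$. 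I expect this orientation-plus-case-analysis to be the only real obstacle: recognizing that the inductive hypothesis is exactly what rules out the ``wrong'' branch of the maximum is the whole content of the argument, and everything else is bookkeeping.

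Once the first inequality holds for every index, iterating it down to $p_1$ produces $\abs{p_0-p_n}\ge \abs{p_0-p_1}+(n-1)\kappa$. Finally, to reach $\abs{p_0-p_n}\ge \kappa n$ I would observe that~(\ref{e1}) with $i=0$ together with the triangle inequality $\abs{p_0-p_2}\le \abs{p_0-p_1}+\abs{p_1-p_2}$ forces $\min\{\abs{p_0-p_1},\abs{p_1-p_2}\}\ge \kappa+2\delta\ge \kappa$, so in particular $\abs{p_0-p_1}\ge \kappa$, completing the chain.
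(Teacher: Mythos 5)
Your proof is correct and takes essentially the same route as the paper's: both establish the base case $n=2$ directly from~(\ref{e1}), apply the quadrilateral inequality of Lemma~\ref{metricspace}(1) to $p_0,p_{n-2},p_{n-1},p_n$ with exactly the same pairing, rule out the first branch of the maximum by contradiction with the inductive hypothesis, and extract $\abs{p_0-p_n}\ge\abs{p_0-p_{n-1}}+\kappa$ from the second branch. The closing observation that $\abs{p_0-p_1}\ge\kappa$ (via the triangle inequality against~(\ref{e1}) at $i=0$) is also the paper's argument, stated there more tersely.
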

\begin{proof} The first inequality implies the second by induction, and the second implies the third as $|p_0-p_1| \ge \kappa$ lest the hypothesis fail for $i=0$. Thus it suffices to prove
\begin{equation}
\abs{p_0-p_n}\ge \abs{p_0-p_{n-1}} + \kappa. \label{f}
\end{equation}
Clearly (\ref{f}) holds when $n=2$; assume $n\ge 3$. By the first part of Lemma~\ref{metricspace},
\begin{eqnarray}
\abs{p_0-p_{n-1}} + \abs{ p_{n-2}-p_n} &\le& 2\delta + \abs{p_0-p_{n-2}}+\abs{p_{n-1}-p_n} \label{e2}\\
\abs{p_0-p_{n-1}} + \abs{ p_{n-2}-p_n} &\le& 2\delta + \abs{p_{n-2}-p_{n-1}}+\abs{p_0 - p_n}\label{e3}
\end{eqnarray}
By induction and (\ref{e1}), the lefthand side of (\ref{e2}) is greater than or equal to $\abs{p_0-p_{n-2}} + \kappa + 2\delta + \abs{p_{n-1}-p_n}$, which contradicts (\ref{e2}), as $\kappa > 0$. Consequently (\ref{e3}) holds. Applying (\ref{e1}) to the lefthand side of (\ref{e3}) yields $\abs{p_0-p_{n-1}} + \kappa + 2\delta \le 2\delta + \abs{p_0 - p_n}$ as desired.
\end{proof}

The following lemma is \cite[Proposition~1.6, Chapter III.H]{BH} and \cite[Lemma~1.5, Chapter 3]{CDP}.

\begin{lem}\label{log} Let $\gamma$ be a path of length $\ell$ from $p$ to $q$ in a $\delta$-hyperbolic space, and $[p,q]$ a geodesic from $p$ to $q$. Any point on $[p,q]$ is a distance at most $1+\log_2\ell$ from some point on $\gamma$.
\end{lem}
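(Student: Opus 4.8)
The plan is to fix a single point $x$ on the geodesic $[p,q]$ and to bound its distance to the image of $\gamma$ by a binary (dyadic) subdivision of $\gamma$, using $\delta$-thinness of triangles at each stage. The logarithm in the bound appears precisely because repeatedly halving the length of the relevant subpath drives it below a fixed threshold in about $\log_2\ell$ steps.

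First I would set up one step of the iteration. Let $m$ be the arc-length midpoint of $\gamma$, splitting it into subpaths $\gamma_1$ (from $p$ to $m$) and $\gamma_2$ (from $m$ to $q$), each of length at most $\ell/2$, and form the geodesic triangle with vertices $p,q,m$ and sides $[p,q]$, $[p,m]$, $[m,q]$. Since this triangle is $\delta$-thin, the point $x\in[p,q]$ lies within distance $\delta$ of a point $x_1$ on $[p,m]\cup[m,q]$; say $x_1\in[p,m]$, the other case being symmetric. Now $[p,m]$ is a geodesic whose endpoints $p,m$ both lie on $\gamma$ and are joined by the subpath $\gamma_1$ of length at most $\ell/2$. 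This is exactly the configuration we started with, but with $\gamma$ replaced by $\gamma_1$, the geodesic replaced by $[p,m]$, and $x$ replaced by $x_1$, at an additive cost of at most $\delta$.

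Next I would iterate this. After $k$ steps I obtain a point $x_k$ with $\abs{x-x_k}\le k\delta$ lying on a geodesic between two points of $\gamma$ that are joined by a subpath of $\gamma$ of length at most $\ell/2^k$; note that the endpoints used at each stage are always arc-length split points, hence genuinely lie on $\gamma$. Choosing $k=\lceil\log_2\ell\rceil$ makes this subpath have length at most $1$, so its two endpoints are at distance at most $1$ and the connecting geodesic has length at most $1$; hence $x_k$ is within distance $1$ of an endpoint, which lies on $\gamma$. Summing the errors gives a distance from $x$ to $\gamma$ of at most $\abs{x-x_k}+1\le \delta\lceil\log_2\ell\rceil+1$, a bound logarithmic in $\ell$; carrying the constants through the subdivision (and under the normalization of the hyperbolicity constant in force) this is the stated estimate $1+\log_2\ell$. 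For the short-path regime $\ell$ near $1$, the claim is checked directly from $\abs{x-p}+\abs{x-q}=\abs{p-q}\le\ell$, which places $x$ within $\ell/2$ of an endpoint of $\gamma$.

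The routine parts are the geometric decay of the subpath lengths and the final short-path estimate. The step I expect to be the main obstacle is the bookkeeping of the accumulated additive error: one must verify that at every stage $x_k$ genuinely sits on a geodesic between two points of $\gamma$ (so the next stage's endpoints really lie on $\gamma$), and that each step costs only a single application of $\delta$-thinness rather than an amount growing with the stage, so that the total is $O(\log_2\ell)$ rather than linear in $\ell$.
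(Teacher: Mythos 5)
The paper gives no proof of this lemma; it simply cites \cite[Proposition~1.6, Chapter III.H]{BH} and \cite[Lemma~1.5, Chapter 3]{CDP}, and your dyadic-bisection argument is precisely the standard proof found there, so it is correct and matches the intended source. One remark on constants: your iteration honestly yields $d(x,\gamma)\le \delta\lceil\log_2\ell\rceil+1$, with a factor of $\delta$ that the lemma as printed omits; this is an imprecision in the paper's statement rather than in your argument, as the paper itself reverts to the $\delta\log_2$ form when it applies the lemma in the proof of Lemma~\ref{prod2}.
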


\begin{cor}\label{log2}
Let $\gamma$ be a path of length $\ell$ from $p$ to $q$ in a $\delta$-hyperbolic space, and $[p,q]$ a geodesic from $p$ to $q$. Any point on the first half of $[p,q]$ is a distance at most $1+2\delta+\log_2\ell$ from some point on the first half of $\gamma$.
\end{cor}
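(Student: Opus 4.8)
The plan is to split both the path $\gamma$ and the geodesic $[p,q]$ at their midpoints and to argue in two independent steps: first that the first half of $[p,q]$ closely shadows a geodesic running to the midpoint of $\gamma$, and second that this auxiliary geodesic is in turn shadowed by the first half of $\gamma$. Write $M$ for the midpoint of $\gamma$, so that the first half of $\gamma$ is a path from $p$ to $M$ of length $\ell/2$, and fix a geodesic $[p,M]$ joining these two points.

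The first step is to apply Lemma~\ref{log} directly to the first half of $\gamma$. Since this is a path from $p$ to $M$ of length $\ell/2 \le \ell$, every point of $[p,M]$ lies within $1+\log_2\ell$ of some point on the first half of $\gamma$. This produces the $1+\log_2\ell$ term in the desired bound and reduces the corollary to a single comparison between the geodesics $[p,q]$ and $[p,M]$.

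The second step is to show that a point $x$ on the first half of $[p,q]$ lies within $2\delta$ of $[p,M]$; the corollary then follows by the triangle inequality, giving a point on the first half of $\gamma$ within $2\delta+1+\log_2\ell$ of $x$. For this I would look at the geodesic triangle with vertices $p,q,M$ and invoke the internal-triangle structure of Lemma~\ref{metricspace}(2): the segments of $[p,q]$ and of $[p,M]$ issuing from $p$ stay within $2\delta$ of one another out to distance $(q|M)_p$ from $p$. Since $x$ lies on the first half of $[p,q]$ we have $\abs{p-x}\le\abs{p-q}/2$, and the point $x'$ on $[p,M]$ at distance $\abs{p-x}$ from $p$ then satisfies $\abs{x-x'}\le 2\delta$, which is exactly what is needed.

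The hard part is making the last comparison valid, namely guaranteeing that $x$ tracks the leg of the comparison tripod emanating from $p$ and not the leg emanating from $q$; this requires $\abs{p-x}\le (q|M)_p$. Expanding the Gromov product by~(\ref{eq:Gromov}), the first-half bound $\abs{p-x}\le\abs{p-q}/2$ delivers this inequality precisely when $\abs{q-M}\le\abs{p-M}$, so the delicate configuration is the one in which the metric midpoint of $\gamma$ sits closer to $p$ and a first-half point of $[p,q]$ can reach past the tripod center toward $q$. Confirming that the first-half restriction, together with the $2\delta$ slack from $\delta$-thinness, still pins such a point to $[p,M]$ is the crux of the argument, and it is exactly here that a careful tripod analysis must be carried out rather than quoting thinness verbatim.
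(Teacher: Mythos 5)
The gap you flag at the end is genuine, and with your choice of splitting point it cannot be closed, because in the configuration you call delicate the statement you are trying to prove (with ``first half of $\gamma$'' read as the arc-length half) is simply false. Take the $0$-hyperbolic space $\mathbb{R}$, let $p=0$, $q=d$, and let $\gamma$ first oscillate between $0$ and $1$ for total arc length $d$ and then run straight from $0$ to $q$, so that $\ell=2d$. The first half of $\gamma$ has image $[0,1]$, while the point $x=d/2$ lies on the first half of $[p,q]$; its distance to the first half of $\gamma$ is $d/2-1$, which exceeds $1+2\delta+\log_2\ell=2+\log_2 d$ for all large $d$. In this example your $M$ lies within distance $1$ of $p$, the product $(q|M)_p$ is at most $1$, and the points of $[p,q]$ beyond the tripod center genuinely track the side $[M,q]$; no refinement of the tripod analysis will pin them to $[p,M]$. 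So your second step is where the proposal breaks down, and it breaks down because of the reading of ``first half of $\gamma$,'' not because of a fixable technical slip.

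The paper's proof avoids this by splitting $\gamma$ not at its arc-length midpoint but at a point $r$ on $\gamma$ chosen (by the intermediate value theorem) so that $\abs{p-r}=\abs{q-r}$, i.e., so that the comparison triangle with vertices $p,q,r$ is isosceles with base $[p,q]$. Then by~(\ref{eq:Gromov}) the Gromov product $(q|r)_p$ equals exactly $\abs{p-q}/2$, the tripod center $t_r$ is the midpoint of $[p,q]$, and Lemma~\ref{metricspace}(2) puts every point of the first half of $[p,q]$ within $2\delta$ of $[p,r]$ with no case analysis; Lemma~\ref{log} applied to the subpath of $\gamma$ from $p$ to $r$ (of length at most $\ell$) then finishes exactly as in your first and third steps. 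The price is that the conclusion concerns the subpath of $\gamma$ from $p$ to $r$ --- a ``metric first half'' --- and that is how the corollary must be interpreted, since this subpath can extend well past the arc-length midpoint of $\gamma$, as the example above shows. In short, your outline (split $\gamma$, compare the two geodesics from $p$ via a tripod, apply Lemma~\ref{log} to the initial piece) is the same as the paper's; the single missing idea is the isosceles choice of splitting point, which is precisely what dissolves the case you could not handle.
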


\begin{proof}
Choose a point $r$ on $\gamma$ so that a geodesic triangle $T$ with vertices $p,q,r$ is isoceles with base $[p,q]$. Because $T$ is isoceles, the vertex $t_r$ defined in Lemma~\ref{metricspace} is at the midpoint of $[p,q]$. It follows that any point on the first half of $[p,q]$ is a distance at most $2\delta$ from $[p,r]$. Apply Lemma~\ref{log} to the subpath of $\gamma$ from $p$ to $r$ and the geodesic $[p,r]$.
\end{proof}

\section{Subgroups of hyperbolic groups}

A group $G$ with choice of generators $W\to G$ is {\em $\delta$-hyperbolic} for some $\delta >0$ (or simply {\em hyperbolic}) if its Cayley graph $\Gamma$ (with edges isometric to the unit interval) is a $\delta $-hyperbolic metric space. The word metric on $G$ extends to a metric on $\Gamma$.

A hyperbolic group is called {\em elementary}  if it contains a cyclic subgroup of finite index.  As non-elementary hyperbolic groups contain nonabelian free subgroups~\cite{Del}, they are non-amenable. Throughout this section, $G$ denotes a non-elementary $\delta$-hyperbolic group.

For each word $w\in W$ and a  vertex $x$ in $\Gamma$, there is a unique path in $\Gamma$ with initial point $x$ and label $w$. Thus we will speak of the path $w$ starting at $x$; $w\inv$ is the same path traversed in the opposite direction starting at the endpoint of $w$.

\begin{figure}
\begin{center}
\psfrag{1}{$1$}
\psfrag{u}{$z$}
\psfrag{v}{$v$}
\psfrag{w}{$u$}
\psfrag{vv}{$\overline v$}
\psfrag{ww}{$\overline u\inv $}
\psfrag{t1}{$t_1$}
\psfrag{tv}{$t_{\overline v}$}
\psfrag{tw}{$t_{\overline u\inv}$}
\psfrag{r}{$r$}
\psfrag{s}{$s$}
\psfrag{p}{$p$}
\psfrag{q}{$q$}
\includegraphics[width=1.75in]{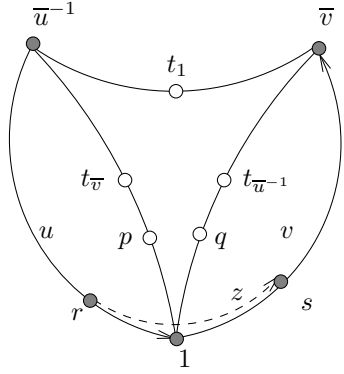}\\
\end{center}
\caption{The triangle $T$ from the proof of Lemma~\ref{prod2}. Shaded dots are vertices of the Cayley diagram of $G$\label{triangle}}
\end{figure}

\begin{lem}\label{prod2}
For any $\e > 0$, the set of $\vec w = (u, v) \in \W 2$ with $(\overline u^{\pm 1}| \overline v^{\pm 1} )_1 < \e \min \{ |u|, |v| \}$ is exponentially generic.
\end{lem}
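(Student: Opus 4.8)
The plan is to show that the complementary ``bad'' set is exponentially negligible. The condition $(\overline u^{\pm 1}|\overline v^{\pm 1})_1<\e\min\{\abs u,\abs v\}$ involves four sign choices, and since a finite union of exponentially negligible sets is exponentially negligible, it suffices to treat one of them: I would bound the set $X\subset\W 2$ of pairs $(u,v)$ with $(\overline u\,|\,\overline v)_1\ge\e\min\{\abs u,\abs v\}$. The remaining three are identical after replacing $u$ by $u\inv$ and/or $v$ by $v\inv$, which only changes the relevant product among $u\inv v,\ uv,\ u\inv v\inv,\ uv\inv$. Throughout I use that a non-elementary hyperbolic group is non-amenable, so $\theta<1$.

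The key step is to convert a large Gromov product into a long subword of $w:=u\inv v$ whose image in $G$ is short. Suppose $(\overline u\,|\,\overline v)_1=\ell\ge\e\min\{\abs u,\abs v\}$ and consider the geodesic triangle with vertices $1,\overline u,\overline v$. By Lemma~\ref{metricspace}(2) the points $x\in[1,\overline u]$ and $y\in[1,\overline v]$ at distance $\ell/2$ from $1$ satisfy $\abs{x-y}\le 2\delta$. Since $(\overline u\,|\,\overline v)_1\le\abs{\overline u}$, we have $\ell/2\le\abs{\overline u}/2$, so $x$ lies on the first half of $[1,\overline u]$; as $u$ is a path of length $\abs u$ from $1$ to $\overline u$, Corollary~\ref{log2} yields a prefix $u_1$ of $u$ with $\abs{\overline{u_1}-x}\le C+\log_2\abs u$ for some $C=C(\delta)$, and likewise a prefix $v_1$ of $v$ with $\abs{\overline{v_1}-y}\le C+\log_2\abs v$. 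Writing $u=u_1u_2$, $v=v_1v_2$, the word $z:=u_1\inv v_1$ is a contiguous subword of $w=u_2\inv u_1\inv v_1v_2$ straddling the place where $u\inv$ meets $v$, and it satisfies
\[
\abs{\overline z}=\abs{\overline{u_1}-\overline{v_1}}\le 2\delta+2C+\log_2\abs u+\log_2\abs v=:D,
\]
while $\abs z=\abs{u_1}+\abs{v_1}\ge\ell-2C-\log_2\abs u-\log_2\abs v$, using only that word length dominates the length of the image. Thus every bad pair produces a subword of $u\inv v$ of length at least $\e\min\{\abs u,\abs v\}-O(\log)$ whose image has length at most $D=O(\log)$.

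It then remains to count directly in the disk $\W 2_n$. Fix $a=\abs u$, $b=\abs v$ and the lengths $a_1=\abs{u_1}$, $b_1=\abs{v_1}$; given these, the pair $(u_1,v_1)$ is determined by the word $z\in I_{a_1+b_1}$ with $\abs{\overline z}\le D$, while $u_2,v_2$ range freely over $(2m)^{a-a_1}(2m)^{b-b_1}$ choices. The estimate $\abs{I_{L,r}}\le 2\rho^{\,L+r}$ with $\rho=(2m)^{\theta+\eta}$ from the proof of Lemma~\ref{amenable}, applied with $L=a_1+b_1$ and $r=D$, bounds the number of admissible $z$, so a typical term of the count is proportional to $(2m)^{a+b+(\theta+\eta-1)(a_1+b_1)}(2m)^{(\theta+\eta)D}$. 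Since $a_1+b_1\ge\e\min\{a,b\}-O(\log)$ and $\theta+\eta-1<0$ for $\eta$ small, dividing by $\abs{\W 2_n}\asymp(2m)^{2n}$ gives an exponent at most $(a+b-2n)+(\theta+\eta-1)\e\min\{a,b\}+O(\log n)$; over $a,b\le n$ this is maximized at $a=b=n$, where it equals $(\theta+\eta-1)\e\,n+O(\log n)<0$. Summing the polynomially many choices of $a,b,a_1,b_1$ then shows $\abs{X\cap\W 2_n}/\abs{\W 2_n}$ decays exponentially. The pairs with $\min\{\abs u,\abs v\}$ bounded, where the cogrowth estimate does not apply, already form an exponentially small fraction of $\W 2_n$ and are discarded separately.

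The hard part is the extraction of the second paragraph. The Gromov product is a statement about \emph{geodesics}, whereas $u$ and $v$ trace arbitrary, typically non-geodesic, paths, so one must pass from the thin-triangle fellow-traveling of Lemma~\ref{metricspace}(2c) to the actual letters of $u$ and $v$ via the logarithmic fellow-traveling of Corollary~\ref{log2}. The delicate point is to guarantee \emph{simultaneously} that the prefixes $u_1,v_1$ are long (length linear in $\ell$, hence in $\min\{\abs u,\abs v\}$) and that $\overline{u_1},\overline{v_1}$ lie within $O(\log)$ of each other in $G$, so that $z=u_1\inv v_1$ is genuinely a long word collapsing to a logarithmically short element---precisely the rare event that the cogrowth bound makes exponentially unlikely.
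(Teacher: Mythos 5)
Your proof is correct, and its geometric half coincides with the paper's: both convert a large Gromov product into a long subword of the concatenation $u^{\pm1}v^{\pm1}$ straddling the corner at $1$, whose image in $G$ is logarithmically short, using the thin-triangle statement (Lemma~\ref{metricspace}) together with logarithmic fellow-traveling (the paper uses Lemma~\ref{log}; your appeal to Corollary~\ref{log2} also works, though the ``first half'' refinement is not actually needed, since any vertex on the path $u$ is already $\overline{u_1}$ for some prefix $u_1$). Where you genuinely diverge is in how the count is finished. The paper is modular: the extracted subword shows that $\pi(\vec w)=uv$ violates the generic condition of Lemma~\ref{quasi}(\ref{part1}), so the image of the bad set under concatenation is exponentially negligible, and this is pulled back through Lemma~\ref{projection} (concatenation is polynomially-to-one on disks). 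You instead inline the counting: fixing the four lengths $(\abs u,\abs v,\abs{u_1},\abs{v_1})$, using injectivity of $(u_1,v_1)\mapsto u_1\inv v_1$, the cogrowth bound $\abs{I_{L,r}}\le 2\rho^{L+r}$ from the proof of Lemma~\ref{amenable}, and the free choice of $(u_2,v_2)$, then taking a union bound over the polynomially many length patterns. Both finishes rest on exactly the same cogrowth estimate; the paper's route buys reusability (Lemma~\ref{quasi} in its second form is needed again for Lemma~\ref{prod1}, and Lemma~\ref{projection} elsewhere for $k$-tuples), while yours is self-contained and makes explicit the prefix structure of the collapsing subword, at the cost of redoing by hand the combinatorics those two lemmas package. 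One wording caveat: ``pairs with $\min\{\abs u,\abs v\}$ bounded'' should be ``pairs with $\min\{\abs u,\abs v\}\le n/2$'' (or any linear cut). A constant bound is not enough, because the cogrowth estimate requires $\abs{u_1}+\abs{v_1}\ge \e\min\{\abs u,\abs v\}-O(\log n)$ to be large; but the linear cut is exactly the paper's own first reduction, and such pairs form a fraction at most $2(2m)^{-n/2}$ of $\W 2_n$ by the same count you use, so this is a one-line repair rather than a gap.
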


\begin{proof}
Without loss of generality assume $\e< 1/2$. A straightforward counting argument shows that
the fraction of $(u,v)\in \W 2_n$ with $|u| < n/2$ or $|v| < n/2$ is less than $2(2m)^{-n/2}$. It follows that $\{\vec w\in \W 2 \mid \min\{|u|, |v|\} < |\vec w|/2\}$ is exponentially negligible. On the complementary set $\min\{|u|, |v|\}\ge |\vec w|/2$. Thus to complete the proof it suffices to show that
\[X=\{ \vec w\in \W 2_n \mid (\overline u^e | \overline v^f )_1 \ge \e |\vec w|/2 \}
\]
is exponentially negligible for each $e=\pm 1$ and $f=\pm 1$. Consider $e=f=1$; the other cases are similar.

For any $\vec w \in X$ let $T$ be a geodesic triangle in the Cayley diagram $\Gamma$ with vertices $1$, $\overline u\inv$ and $\overline v$ as in Figure~\ref{triangle}.
Pick points $p$ and $q$ a distance $\e |\vec w|/2$  from $1$ along the geodesics $[1, \overline u\inv  ]$ and $[1, \overline v]$ respectively. By Lemma~\ref{metricspace}  $\abs { p - q} \le 2\delta$.  As every point of $\Gamma$ is a distance at most $1/2$ from a vertex, Lemma~\ref{log} yields $\abs {p-r} \le 3/2 + \delta \log_2 |\vec w| $ for some vertex $r$ on $u$. Likewise $\abs {q-s} \le 3/2 + \delta \log_2 |\vec w| $ for some vertex $s$ on $v$.

Let $z$ be the subword of $w$ which labels the subpath from $r$ to $s$. By construction
\begin{align*}
\abs{\overline z} &= \abs {r-s} \le  3 + 2\delta + 2\delta\log_2 |\vec w| \\
\abs z &\ge (|p| - |p - r|) + (|q| - |q - s|) \ge \e |\vec w| - (3 + 2\delta\log_2 |\vec w|).
\end{align*}
As $|\vec w|\le |uv|\le 2|\vec w|$, we have
\begin{align*}
\abs{\overline z} &\le (\frac{1-\theta}{\theta}-\frac{\e}{4})|\vec w|\le  (\frac{1-\theta}{\theta}-\frac{\e}{4})|uv|\\
\abs z &\ge \frac{\e|\vec w|}{2} \ge \frac{\e}{4}|uv|
\end{align*}
for $|\vec w|$ large enough; that is, for all $\vec w$ in some co-finite subset $X'$ of $X$. By Lemma~\ref{quasi}(\ref{part1}) the image of $X'$ under the map $\pi$ of Lemma~\ref{projection} is exponentially negligible. By Lemma~\ref{projection} $X'$ and hence $X$ are exponentially negligible.
\end{proof}

\begin{lem}\label{prod1}
For any $\e > 0$, the set of $ w \in W$ such that $(\overline w| \overline w\inv )_1 < \e \abs w$ is exponentially generic.
\end{lem}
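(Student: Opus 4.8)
The plan is to follow the strategy of Lemma~\ref{prod2}, specialized to the (degenerate) pair $(w, w\inv)$. By monotonicity of exponential genericity under inclusion, the set $\{w\mid(\overline w|\overline w\inv)_1<\e\abs w\}$ grows with $\e$, so it suffices to treat arbitrarily small $\e$; thus I assume $\e<1/2$ and moreover $\e<\tfrac14\cdot\frac{1-\theta}{\theta}$ (recall $\theta<1$ since a non-elementary hyperbolic $G$ is non-amenable, so $\frac{1-\theta}{\theta}>0$). Writing $Y=\{w\in W\mid(\overline w|\overline w\inv)_1\ge\e\abs w\}$ for the complementary set, I would show that $Y$ is exponentially negligible. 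Fix $w\in Y$, put $g=\overline w$, and work in the Cayley graph $\Gamma$ with a geodesic triangle $T$ on the vertices $1$, $\overline w\inv=g\inv$, $\overline w=g$. Since $(\overline w|\overline w\inv)_1=\abs{1-t_{\overline w}}\ge\e\abs w$ in the notation of Lemma~\ref{metricspace}, the two sides issuing from $1$ fellow-travel: picking $p$ on $[1,g\inv]$ and $q$ on $[1,g]$ each at distance $\e\abs w$ from $1$ gives $\abs{p-q}\le2\delta$ (these points exist because $\e\abs w\le(\overline w|\overline w\inv)_1\le\abs g$).

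Next I would pass from the geodesics to the labelled paths. The path labelled $w\inv$ from $1$ ends at $g\inv$ and is the reversal of the path labelled $w$ from $g\inv$ to $1$, so that the word $ww$ labels a single path running from $g\inv$ through the central vertex $1$ to $g$. Applying Lemma~\ref{log} to each of the two half-paths (each of length $\abs w$) yields a vertex $r$ on the first $w$ with $\abs{p-r}\le 3/2+\log_2\abs w$ and a vertex $s$ on the second $w$ with $\abs{q-s}\le 3/2+\log_2\abs w$. Let $z$ be the subword of $ww$ labelling the subpath from $r$ to $s$; it straddles $1$, and
\[
\abs{\overline z}=\abs{r-s}\le\abs{r-p}+\abs{p-q}+\abs{q-s}\le 3+2\delta+2\log_2\abs w .
\]
On the other hand the $r$-to-$1$ portion of $z$ has word-length at least its geodesic length $\abs{r-1}\ge\abs{p-1}-\abs{p-r}\ge\e\abs w-(3/2+\log_2\abs w)$, and similarly for the $1$-to-$s$ portion, so $\abs z\ge 2\e\abs w-(3+2\log_2\abs w)\ge\e\abs w$ once $\abs w$ is large. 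Thus $z$ is a subword of $ww$ that is short in $G$ (logarithmically) but long as a word (linearly).

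The hard part is the \emph{upper} bound $\abs z\le\abs w$, which is exactly the hypothesis required to feed $z$ into Lemma~\ref{quasi}(\ref{part2}) on subwords of $ww$. Geodesic distance from $1$ does not by itself pin down the word-length positions of $r$ and $s$ along their paths, so a priori $\abs z$ could be as large as $2\abs w$. My proposed remedy is to invoke Corollary~\ref{log2} in place of Lemma~\ref{log}: it places $r$ and $s$ on the \emph{first halves} of their respective $w$-paths, so each of the two portions of $z$ has word-length at most $\abs w/2$, whence $\abs z\le\abs w$ (after trimming a bounded number of letters, which perturbs $\abs{\overline z}$ only by a constant). Corollary~\ref{log2} applies provided $p$ and $q$ lie on the first halves of their geodesics, i.e. $\e\abs w\le\abs g/2$; but by Lemma~\ref{amenable} (with parameter $\tfrac12\frac{1-\theta}{\theta}$) the set $\{\abs{\overline w}\ge 2\e\abs w\}$ is exponentially generic, so after discarding its negligible complement I may assume $\abs g\ge 2\e\abs w$. (Alternatively, one can bound the word-lengths of the two portions of $z$ directly by applying Lemma~\ref{quasi}(\ref{part1}) to them as subwords of $w$, using that a subword of length $\approx a$ is generically undistorted, which forces $a\le\frac{\e\abs w+O(\log\abs w)}{\frac{1-\theta}{\theta}-\e}$; the smallness of $\e$ again yields $\abs z\le\abs w$.)

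Finally I would assemble the contradiction. For every sufficiently long $w\in Y$ with $\abs g\ge 2\e\abs w$, the subword $z$ of $ww$ satisfies $\e\abs w\le\abs z\le\abs w$ yet $\abs{\overline z}=O(\log\abs w)$, which is incompatible with the linear lower bound $\abs{\overline z}\ge(\frac{1-\theta}{\theta}-\e)\abs z\ge(\frac{1-\theta}{\theta}-\e)\e\abs w$ furnished by Lemma~\ref{quasi}(\ref{part2}). Hence all but finitely many such $w$ fail the (exponentially generic) conclusion of Lemma~\ref{quasi}(\ref{part2}); combining this with the negligibility of $\{\abs{\overline w}<2\e\abs w\}$ and of any finite set shows $Y$ is exponentially negligible, so its complement is exponentially generic. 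The single delicate point throughout is the word-length control giving $\abs z\le\abs w$; the remainder is a faithful transcription of the proof of Lemma~\ref{prod2}.
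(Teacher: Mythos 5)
Your proof is correct and is essentially the paper's own: the paper proves this lemma in one line, by forming the geodesic triangle on $1$, $\overline w\inv$, $\overline w$ and repeating the argument of Lemma~\ref{prod2} with Lemma~\ref{quasi}(\ref{part2}) in place of Lemma~\ref{quasi}(\ref{part1}), which is exactly your outline. The one point you elaborate beyond the paper --- invoking Corollary~\ref{log2} so that $r$ and $s$ land on the appropriate halves of the two copies of $w$, guaranteeing the bound $\abs z\le\abs w$ that Lemma~\ref{quasi}(\ref{part2}) requires --- is precisely the detail the paper's one-line proof leaves implicit, and is evidently what the otherwise never-cited Corollary~\ref{log2} was stated for.
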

\begin{proof}
Form a geodesic triangle $T$ with vertices $1$, $\overline w\inv$, $\overline w$, and argue as in the proof of the preceding lemma but with Lemma~\ref{prod1} in place of Lemma~\ref{prod2} and
Lemma~\ref{quasi}(\ref{part2}) in place of  Lemma~\ref{quasi}(\ref{part1}).
\end{proof}

\begin{lem}~\label{C} Fix $\e\in (0,1)$. The set $\mathcal C$ of all $\vec w=(w_1,\ldots, w_k)\in \W k$ satisfying the following conditions is exponentially generic.
\begin{enumerate}
\item\label{C0} $|w_i| \ge |\vec w|(1-\e)$ for $1\le i \le k$.

\item\label{C1} $\abs{\overline w_i} \ge (\frac{1-\theta}{\theta} -\e)\abs{w_i} + 2\delta$.

\item\label{C2} $(\overline w_i^{\pm 1} \vert \overline w_j^{\pm 1})_1 < \e \abs{\vec w}$ except when $i=j$ and the exponents are equal.
\end{enumerate}

\end{lem}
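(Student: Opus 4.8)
The plan is to show that the set $\mathcal C$ is exponentially generic by verifying that each of the three defining conditions cuts out an exponentially generic set, and then invoking the fact (stated in the excerpt) that finite intersections of exponentially generic sets are exponentially generic. Each condition has already been essentially prepared by one of the preceding lemmas, so the task is largely one of assembling these pieces and handling the bookkeeping that arises because the conditions are phrased in terms of $|\vec w|$ rather than the individual $|w_i|$.

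\medskip

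\noindent\textbf{Condition \eqref{C0}.} First I would dispose of the length-balancing condition. The set where some coordinate is short, namely where $|w_i| < (1-\e)|\vec w|$ for some $i$, should be exponentially negligible by a direct counting argument analogous to the one opening the proof of Lemma~\ref{prod2}: if $|\vec w| = n$, then some $w_j$ has length $n$ (or close to it), and the probability that a fixed coordinate has length below $(1-\e)n$ decays exponentially, since the disk $W_{(1-\e)n}$ is exponentially smaller than $W_n$. Summing over the $k$ coordinates (a finite union of negligible sets) keeps this negligible, so the complement — condition \eqref{C0} — is generic.

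\medskip

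\noindent\textbf{Condition \eqref{C1}.} This is exactly the content of Lemma~\ref{amenable}, applied coordinatewise with the additive constant $K = 2\delta$ (recall $G$ is non-amenable, being non-elementary hyperbolic). For each fixed $i$, the set of $w_i \in W$ satisfying $|\overline w_i| \ge (\frac{1-\theta}{\theta}-\e)|w_i| + 2\delta$ is exponentially generic in $W$; pulling this back along the $i$-th coordinate projection $\W k \to W$ preserves exponential genericity (this is a routine fibering argument, analogous in spirit to Lemma~\ref{projection}), and intersecting over $i=1,\ldots,k$ keeps the resulting set generic.

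\medskip

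\noindent\textbf{Condition \eqref{C2}.} For the Gromov-product condition I would split into cases. When $i \ne j$, the relevant set is exactly the one shown to be exponentially generic in Lemma~\ref{prod2}, applied to the pair $(w_i, w_j)$ and then pulled back to $\W k$ via the projection onto the $(i,j)$ coordinates; one must note that Lemma~\ref{prod2} gives the bound $\e \min\{|w_i|,|w_j|\}$, which under condition \eqref{C0} is at least $\e(1-\e)|\vec w|$, so a harmless rescaling of $\e$ recovers the stated bound $\e|\vec w|$. When $i = j$ with opposite exponents, the relevant set is the one handled by Lemma~\ref{prod1}, again pulled back along a single coordinate. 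There are only finitely many pairs $(i,j)$ and finitely many sign choices, so the intersection over all of them remains generic.

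\medskip

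\noindent\textbf{The main obstacle} I anticipate is not any single inequality but the reconciliation of the various normalizations of $\e$. Lemma~\ref{prod2} bounds the Gromov product by $\e\min\{|u|,|v|\}$, Lemma~\ref{amenable} carries its own $\e$, and the target condition is stated uniformly in terms of $|\vec w|$; so I would fix at the outset a single small parameter and allow each invoked lemma to consume a constant fraction of it, verifying that condition \eqref{C0} lets me convert every $\min\{|w_i|,|w_j|\}$ or $|w_i|$ appearing in the hypotheses into the common scale $|\vec w|$ up to a factor $(1-\e)$. Once these conversions are arranged consistently, the conclusion follows since the finite intersection of the finitely many exponentially generic sets produced above is itself exponentially generic.
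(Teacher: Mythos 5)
Your proposal is correct and follows essentially the same route as the paper: the paper's proof likewise establishes each of the three conditions separately (a counting argument for \eqref{C0}, Lemma~\ref{amenable} for \eqref{C1}, and Lemmas~\ref{prod2} and~\ref{prod1} for \eqref{C2}) and then intersects the finitely many exponentially generic sets. Your extra bookkeeping about pulling back along coordinate projections and reconciling the $\e$'s is detail the paper leaves implicit, and is handled correctly.
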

\begin{proof}
It suffices to show that for each condition above the set of $\vec w$ which satisfy that condition is exponentially generic. A straightforward counting argument suffices for (\ref{C0}), and (\ref{C1}) follows from Lemma~\ref{amenable}. The remaining assertion follows from Lemmas~\ref{prod2} and~\ref{prod1}.
\end{proof}

Now we complete the proof of Theorem~\ref{hyp}.
\begin{proof}
Pick $\e>0$ as in the statement of Theorem~\ref{hyp}; without loss of generality $\e< 1/3$. We apply Lemma~\ref{C} with $\e'=\e/3$ in place of $\e$ to show that the conclusions of Theorem~\ref{hyp} hold for all $\vec w\in\mathcal C$.

Fix $\vec w=(w_1, \ldots w_k) \in \mathcal C$ and a freely reduced word $z$ in the $w_i$'s. Write $z=x_1\cdots x_t$ where each $x_j$ equals $w_i$ or $w_i\inv$ for some $i$. By~(\ref{eq:Gromov})
\begin{align*}
|\overline x_j - \overline x_{j+1}| &= |\overline x_j| + |\overline x_{j+1}| - 2(\overline x_j | \overline x_{j+1} )_1\\
&\ge \max\{ |\overline x_j|, |\overline x_{j+1}| \} + (\frac{1-\theta}{\theta}-\e')|\vec w|(1-\e') + 2\delta - \e'|\vec w|\\
&\ge \max\{ |\overline x_j|, |\overline x_{j+1}| \} + (\frac{1-\theta}{\theta}-\e)|\vec w| + 2\delta.
\end{align*}
For $1 < \ell \le t$ Lemma~\ref{length} yields
\begin{equation*}
|\overline{x_1\cdots x_{\ell}}| \ge |\overline{x_1\cdots x_{\ell-1}}| + (\frac{1-\theta}{\theta}-\e)|\vec w| \ge \ell (\frac{1-\theta}{\theta}-\e)|\vec w| > 0.
\end{equation*}
Hence $|\overline z| > 0 $, which implies that $Y=\{w_1,\ldots w_k\}$ freely generates a free subgroup $H\subset G$. In addition since $|\overline z|_G = |\overline z| \ge t(\frac{1-\theta}{\theta}-\e)|\vec w|$ and $|\overline z|_{G,Y} \le t|\vec w|$, the compression factor is bounded below by $\frac{1-\theta}{\theta}-\e$.
\end{proof}

The last part of the preceding proof provides the following corollary.

\begin{cor}\label{free}
Let $\mathcal D$ be the set of all $K$-tuples $\vec w=(w_1,\ldots, w_k)$ such that
$|\overline{w_i}^{\pm 1} - \overline{w_j}^{\pm 1}| > \max{|\overline{w_i}|, |\overline{w_j}|} + 2\delta$ except when $i=j$ and the exponents agree. $\mathcal D$ is exponentially generic, and for each $\vec w=(w_1,\ldots,w_k)\in \mathcal D$ and freely reduced word $w_{i_1}^{e_1}\cdots w_{i_t}^{e_t}$,
$|\overline{w_{i_1}^{e_1}\cdots w_{i_\ell}^{e_{\ell}}}| > |\overline{w_{i_1}^{e_1}\cdots w_{i_{\ell-1}}^{e_{\ell-1}}}|$
for $1< \ell \le t$.
\end{cor}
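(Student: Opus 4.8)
The plan is to deduce both assertions directly from Lemma~\ref{C} and Lemma~\ref{length}, reusing the mechanism from the final paragraph of the proof of Theorem~\ref{hyp}. Two things must be shown: that $\mathcal D$ is exponentially generic, and that membership in $\mathcal D$ forces the geodesic lengths of the prefixes of a freely reduced product to increase strictly. For the first, I would show that the set $\mathcal C$ of Lemma~\ref{C}, for a sufficiently small choice of the parameter $\e$, is contained in $\mathcal D$. Since a superset of an exponentially generic set is exponentially generic (the estimate $1-\frac{|\mathcal D\cap \W k_n|}{|\W k_n|}\le 1-\frac{|\mathcal C\cap \W k_n|}{|\W k_n|}$ only improves convergence), this gives $\rho_e(\mathcal D)=1$.

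To see $\mathcal C\subseteq\mathcal D$, fix indices $i,j$ and exponents $e,f$ not falling under the exception clause, and apply the Gromov product identity~(\ref{eq:Gromov}) at the base vertex $1$ in the form $|\overline{w_i}^{e}-\overline{w_j}^{f}|=|\overline{w_i}|+|\overline{w_j}|-2(\overline{w_i}^{e}\mid\overline{w_j}^{f})_1$, using $|\overline{w_i}^{e}|=|\overline{w_i}|$. Condition~(\ref{C2}) bounds the Gromov product by $\e|\vec w|$, so after assuming without loss of generality $|\overline{w_i}|\le|\overline{w_j}|$ it suffices to check $|\overline{w_i}|\ge 2\e|\vec w|+2\delta$, which then yields $|\overline{w_i}^{e}-\overline{w_j}^{f}|>\max\{|\overline{w_i}|,|\overline{w_j}|\}+2\delta$. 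Conditions~(\ref{C0}) and~(\ref{C1}) give $|\overline{w_i}|\ge(\frac{1-\theta}{\theta}-\e)(1-\e)|\vec w|+2\delta$, and since $G$ is non-amenable we have $\theta<1$, so $\frac{1-\theta}{\theta}$ is a fixed positive constant; hence $(\frac{1-\theta}{\theta}-\e)(1-\e)>2\e$ for all sufficiently small $\e$, which is exactly what is needed. Fixing such an $\e$ finishes the inclusion, and note that the exception clause of $\mathcal D$ coincides verbatim with that of~(\ref{C2}).

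For the second assertion, fix $\vec w\in\mathcal D$ and a freely reduced word $w_{i_1}^{e_1}\cdots w_{i_t}^{e_t}$, and in the Cayley graph set $p_0=1$ and $p_\ell=\overline{w_{i_1}^{e_1}\cdots w_{i_\ell}^{e_\ell}}$, so that $|p_0-p_\ell|=|\overline{w_{i_1}^{e_1}\cdots w_{i_\ell}^{e_\ell}}|$. Then $|p_{\ell-1}-p_{\ell+1}|=|\overline{w_{i_\ell}}^{-e_\ell}-\overline{w_{i_{\ell+1}}}^{e_{\ell+1}}|$, and the pair appearing here is exempt from the defining condition of $\mathcal D$ precisely when $i_\ell=i_{\ell+1}$ and $-e_\ell=e_{\ell+1}$, i.e. when $e_{\ell+1}=-e_\ell$, which is excluded by free reduction. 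Thus the condition of $\mathcal D$ applies and gives $|p_{\ell-1}-p_{\ell+1}|>\max\{|p_{\ell-1}-p_\ell|,|p_\ell-p_{\ell+1}|\}+2\delta$ for every relevant $\ell$. Letting $\kappa$ be the minimum of the finitely many resulting gaps (positive since $t$ is finite) verifies hypothesis~(\ref{e1}) of Lemma~\ref{length}; applying that lemma to each initial segment $p_0,\ldots,p_\ell$ gives $|p_0-p_\ell|\ge|p_0-p_{\ell-1}|+\kappa>|p_0-p_{\ell-1}|$, which is the claimed strict monotonicity.

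I expect the only real subtlety to be the bookkeeping of exponents: matching the exception clause of $\mathcal D$ (equal index and equal exponent) against the inverse $\overline{w_{i_\ell}}^{-e_\ell}$ that arises from expressing $|p_{\ell-1}-p_{\ell+1}|$ as a distance, so that free reduction correctly guarantees the hypothesis of Lemma~\ref{length}. Everything else is routine: the genericity is inherited from Lemma~\ref{C} via the inclusion $\mathcal C\subseteq\mathcal D$, and the monotonicity is a direct reading of the first inequality in the conclusion of Lemma~\ref{length}.
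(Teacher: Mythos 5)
Your proposal is correct and follows essentially the same route as the paper: genericity of $\mathcal D$ via the inclusion $\mathcal C\subseteq\mathcal D$ (which the paper asserts, relying on the Gromov-product computation already carried out in the proof of Theorem~\ref{hyp}, and which you spell out explicitly), and strict monotonicity of prefix lengths by extracting a uniform $\kappa>0$ from the finitely many strict inequalities and invoking Lemma~\ref{length}. Your bookkeeping of the inverse exponent $\overline{w_{i_\ell}}^{-e_\ell}$ against the exception clause is exactly the point the paper leaves implicit, and it is handled correctly.
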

\begin{proof}
 $D$ is exponentially generic because it contains $\mathcal{C}$. By hypothesis there exists $\kappa > 0$ such that
\[|\overline w_j^{\pm 1} - \overline w_{j+1}^{\pm 1}|> \max\{ |\overline w_i|, |\overline w_j| \} + \kappa + 2\delta\]
for all applicable cases. Lemma~\ref{length} applies.
\end{proof}

\section{The membership problem for generic subgroups of hyperbolic groups}\label{membership}

Let $W\to G$ be a choice of generators for $G$. The membership problem is to decide for words $z, w_1,\ldots, w_k\in W$ if $\overline u$ is in the subgroup generated by $\overline{w_1},\ldots \overline{w_k}$. Corollary~\ref{free} provides the basis for a procedure to solve the membership problem once we know how to compute geodesic representatives for $u\in W$, that is words of minimum length with the same image in $G$ as $u$.

There is no uniform algorithm for computing geodesic representatives in presentations of hyperbolic groups. If there were, then since trivial groups are hyperbolic, there would be a feasible procedure to decide whether a finite presentation presents the trivial group; namely check the geodesic length of all the generators. However, this decision problem is unsolvable.

On the other hand given a presentation for a hyperbolic group $G$, one can precompute a strongly geodesic automatic structure for $G$ with respect to the original choice of generators as well as an integer $\delta$ such that all geodesic triangles are $\delta$-thin~\cite{EH}. For the reasons we have discussed there is no computable bound (in terms of the size of the original presentation) for how long this precomputation will take. Nevertheless, once the precomputation is done, one can compute geodesic representatives in linear time by an algorithm due to M. Shapiro~\cite{EH2}.

By Corollary~\ref{free} the following partial algorithm solves the membership problem for all $z\in W$ and  $(w_1,\ldots, w_k)\in \mathcal D$. If in addition $z$ is in the subgroup generated by the $w_1,\ldots, w_k$, the algorithm expresses $z$ as a word in the $w_i$'s.

\begin{alg}\label{gwp1}
{\sc Input} $\vec w = (w_1,\ldots, w_k)\in W$, and $z\in W$\\
{\sc If} the hypothesis of Corollary~\ref{free} does not hold, {\sc Output} ''Failure``\\
{\sc Else While} $|\overline z| > 0$\\
\indent {\sc If} $|\overline{zw_j^e}|< |\overline z|$ for some $j$ and $e=\pm 1$\\
\indent {\sc Then Output} $w_j^e$ and set $z$ equal to a geodesic representative of $zw_j^e$\\
\indent {\sc Else Output} ''Failure`` and halt\\
{\sc Output} ''$z$ is in the subgroup generated by $w_1,\ldots, w_k$``
\end{alg}

Checking the hypothesis of Corollary~\ref{free} requires computing $O(k^2)$ geodesic lengths for words of length at most $2|\vec w|$. There will be no more than $|z|$ passes through the while loop, and during each pass $O(k)$ geodesic representatives are computed for words of length at most $|z|+ |\vec w|$. Thus the time complexity of Algorithm~\ref{gwp1} is $O( (k^2 + k|z|)(2|\vec w| + |z|) = O((k|\vec w| +|z|)^3)$.

\section{The Word Problem for Amenable Groups}

In this section we prove Theorem~\ref{generic}.

\begin{proof}
Let $G$ be a finitely presented amenable group with choice of generators $W \to G$  and unsolvable word problem.  Let $\mathcal A$ be a correct partial algorithm for the word problem in $G$. The input to $\mathcal A$ is a word $w \in W$.   Assume that $D$, the domain of $\mathcal A$, is exponentially generic; i.e., there exists a positive $\rho < 1$ such that
\begin{equation}\label{eq:1}
\frac{\abs{W_n - D}}{\abs{W_n}} \le \rho^n \mbox{ for $n$ large enough}
\end{equation}
where $W_n$ is the set of words of length $n$. We shall obtain a contradiction by showing that under these conditions the word problem for $G$ is solvable.

Let $\mathcal B$ be the partial algorithm which on input $w$ recursively enumerates all words $v_1, v_2, \ldots$ defining the identity in $G$ and applies $\mathcal A$ to $wv_1, wv_2, \ldots$. Since $\mathcal A$ does not always converge, we organize this computation as follows. For each $m=1,2,\ldots$, $\mathcal B$ computes $v_1,\ldots v_m$, and applies the first $m$ steps of $\mathcal A$ to each $wv_i$. If $\mathcal A$ halts for some $i$, then eventually $\mathcal B$ discovers that fact and halts too. $\mathcal B$ accepts $w$ as a word defining the identity if and only if $\mathcal A$ accepts $wv_i$.

Clearly $\mathcal B$ converges on $w$ if and only if $\mathcal A$ converges on some $wv_i$. Hence there must exist a word $w$ such that $\mathcal A $ does not halt on any $wv_i$. Fix $n > \abs w$.  For any $v_i$ of length $n-\abs w$, we have $wv_i\in W_n -D$ because $\mathcal A$ does not halt on $wv_i$. We conclude
\[
\abs{W_n - D} \ge \abs{V_{n-\abs w}}
\]
where for any $k\ge 0$, $V_k$ is the set of  words of length $k$ which define the identity in $G$. Since $G$ is amenable, we have
\[
\lim_{k\to\infty, 2\vert k}\bigl(\frac{\abs{V_k}}{\abs{W_k}}\bigr)^{1/k} = 1.
\]

It follows from the equations above that for $n$ large enough and even,
\[
\abs{V_{n-\abs w}} \ge (\frac{1+\rho}{2})^{(n-\abs w)}\abs{W_{(n-\abs w)}},
\]
whence
\[
\abs{W_n-D} \ge (\frac{1+\rho}{2})^{(n-\abs w)}\abs{W_{(n-\abs w)}}
\]
which implies
\[
\rho^n \geq \frac{\abs{W_n - D}}{\abs{W_n}}\ge C_w(\frac{1+\rho}{2})^n
\]
for some constant $C_w$ (depending on $w$)  and infinitely many $n$,  which is impossible since $\rho < \frac{1+\rho}{2}$.
\end{proof}

\section{Open Problems}

In this section we formulate some  open problems which seem to be interesting in this area. 

Let $G$ be a group generated by a finite set $A$ and  $W$ the free monoid with basis $A \cup A^{-1}$.   For $k\ge 1$ put
$
\W k = \{(w_1,\ldots, w_k) \mid w_i \in W\}
$
and define the disk of radius $n$ in $\W k$ by $\W k_n  =  \{(w_1, \ldots,w_k)  \in\W k\mid \abs{w_i} \le n\}$.

We say that a group $G$  satisfies the {\em generic free basis} property if for each choice of generators $W\to G$  and every $k\ge 1$ the set  of all tuples $(w_1, \ldots, w_k)\in W^{(k)}$ for which $\overline w_1, \ldots, \overline w_k$ generate a free subgroup of rank $k$ in $G$, is generic  with respect to the stratification  $W^{(k)} = \cup_{n =1}^\infty \W k_n$.

\begin{problem}
Does a finitely generated group $G$ have the generic free basis property if some of its subgroups of finite index has it?
\end{problem}

\begin{problem}
Does the group $SL(n,\mathbb{Z})$, $n \geq 3$, have the generic free basis property?

\end{problem}

\begin{problem}
Construct a finitely presented group where the word problem is undecidable on every generic set of inputs (which are words in a given finite generating set).
\end{problem}


%


\begin{thebibliography}{99}

\bibitem[AAG]{AAG} I. Anshel, M. Anshel, and D. Goldfeld, An algebraic method for public-key cryptography, Mathematical Research Letters, 6 (1999) 287--291.

\bibitem[AO] {AO} G. Arzhantseva and A. Olshanskii, Generality of the class of groups in which subgroups with a lesser number of generators are free, (Russian) Mat.\ Zametki 59 (1996) 489--496; translation in Math.\ Notes 59 (1996) 350 -- 355.

\bibitem[Arz1]{A1} G. Arzhantseva, On groups in which subgroups with a fixed number of generators are free, (Russian) Fundam.\ Prikl.\ Mat.\ 3 (1997), 675--683.

\bibitem[Arz2]{A2} G. Arzhantseva, Generic properties of finitely presented groups and Howson's theorem, Comm. Algebra 26 (1998) 3783--3792.

\bibitem[BMS]{BMSho}   G. Baumslag, C. F. Miller III and H. Short, Unsolvable problems about small cancellation and word hyperbolic groups, Bulletin of the London Mathematical Society 26(1) (1994) 97--101.

\bibitem[BV1]{BogV} O. Bogopolski, E. Ventura, The mean Dehn function of abelian groups, J. Group Theory 11 (2008) 569--586.

\bibitem[BV2]{BV} J. Burillo, E. Ventura, Counting primitive elements in free groups, Geom.\ Dedicata 93 (2002) 143--162.

\bibitem[BH]{BH} M. Bridson, A. Haefliger, Metric spaces of non-positive curvature, Springer, 1999.

\bibitem[BW]{BW} M.Bridson, D. Wise, Malnormality is undecidable in hyperbolic groups, Israel J. of Math., 124 (2001) 313-316.

\bibitem[BMS]{BMS} A. V. Borovik, A. G. Myasnikov and V. Shpilrain, Measuring sets in infinite groups,  in
{\em Computational and Statistical Group Theory}, Contemporary Math., Amer.\ Math.\ Soc., 298, 21--42.

\bibitem[BMR1]{BMR1}  A. Borovik, A. Myasnikov, V. Remeslennikov, Multiplicative measures on free groups, International Journal of Algebra and Computation, 13 (2003) 705--731.

\bibitem[BMR2]{BMR2} A. V. Borovik, A. G. Myasnikov and V. N. Remeslennikov, Algorithmic stratification of the conjugacy problem in Miller's groups, International Journal of Algebra and Computation 17 (2007), 963--997.

\bibitem[BMR3]{BMR3}  A. V. Borovik, A. G. Myasnikov and V. N. Remeslennikov,  The conjugacy problem in amalgamated products I: regular elements and black holes, Intern.\ J. of Algebra and Computation, 17 (2007) 1301--1335

\bibitem[BM]{BM} Alexandre V. Borovik and Alexei G. Myasnikov, Quotient tests and random walks in computational group theory, in {\em Topological and Asymptotic Aspects of Group Theory}, Contemp.\ Math., Amer.\ Math.\ Soc.\ (2006) 31--45.

\bibitem[Cha1]{Champetier1} C. Champetier, Proprietes statistiques des groupes de presentation finie, Adv.\ Math.\ 116 (1995), 197--262.

\bibitem[Cha2]{Champetier2} C. Champetier, The space of finitely generated groups, Topology 39 (2000) 657--80.

\bibitem[CS]{CherixSchaeffer}  P.-A. Cherix and G. Schaeffer, An asymptotic Freiheitssatz for finitely generated groups, Enseign.\ Math.\ 44 (1998) 9--22.

\bibitem[CERT]{CERT} S. Cleary, M. Elder, A. Rechnitzer, J. Taback, Random subgroups of Thompson's group $F$, Groups Geom.] Dyn. 4 (2010) 91-126.

\bibitem[Coh]{Cohen} J. M. Cohen, {\em Cogrowth and amenability of discrete groups}, J. Funct. Anal. 48 (1982) 301--309.

\bibitem[CDP]{CDP} M. Coornaert, T. Delzant, and A. Papadopoulos, G\'eom\'etrie et th\'eorie des groupes, Lecture Notes in Math., v.\ 1441, Springer Verlag, Berlin, 1990.

\bibitem[Del]{Del} T. Delzant, Sous-groupes \`a deux g\'en\'erateurs des groupes hyperboliques, in {\em Group theory from a geometrical viewpoint (Trieste, 1990)},  World Scientific Publ., River Edge, NJ, 1991, 177--189.

\bibitem[Ep]{Epstein} D. Epstein, J. Cannon, D. Holt, S. Levy, M. Paterson, W. Thurston, {\em Word processing in groups}, Jones and Bartlett Publishers, 1992.

\bibitem[EH]{EH} D. B. A. Epstein, D. F. Holt, Computation in word-hyperbolic groups, Internat.\ J. Algebra Comput.\ 11 (2001) 467--487.

\bibitem[EH2]{EH2} D. B. A. Epstein, D. F. Holt, The linearity of the conjugacy problem in word--hyperbolic groups, Internat.\ J. Algebra Comput.\ 16 (2006) 287--305.

\bibitem[FMR]{FMR} B.Fine, A.Myasnikov, G.Rosenberger, Generic subgroups of group amalgams, Groups,Complexity, Cryptology, 1 (2009) 51--61.

\bibitem[GKTTV]{GK} D. Garber, S. Kaplan, M.Teicher, B. Tsaban, U. Vishne, Length-based conjugacy search in the Braid group, in {\em Algebraic Methods in Cryptography}, Contemporary Mathematics, AMS, 418, 2006, 75--88.

\bibitem[GdH]{GdH}
E. Ghys, P. de la Harpe, eds., {\em Sur les Groupes Hyperboliques d'apr\'es Mikhael Gromov}, Progress in Math., 83, Birkh\"auser, 1990.

\bibitem[Gri1]{Grigorchuk1} R. I. Grigorchuk, Symmetric random walks on discrete groups, Russian Math.\ Surv.\ 32 (1977) 217--218.

\bibitem[Gri2]{Grigorchuk2} R. I. Grigorchuk, Symmetrical random walks on discrete groups, in {\em Multicomponent Random Systems}, eds. R. L. Dobrushin and Ya. G. Sinai, Dekker, New York, 1980, 285--325.

\bibitem[Gro]{Gro}
M. Gromov, Hyperbolic groups, in {\em Essays in Group Theory}, MSRI Series, vol.8, (S.M. Gersten, ed.), Springer, 1987, 75--263.

\bibitem[Gro2]{Gro2}  M. Gromov, Asymptotic invariants of infinite groups, in {\em Geometric Group Theory, vol. 2 (Sussex, 1991)}, London Math.\ Soc.\ Lecture Note Ser., 182, Cambridge Univ. Press,
Cambridge, 1993, 1--295.

\bibitem[Gro3]{Gro3} M. Gromov, Random walks in random groups, Geom.\ Funct.\ Analysis 13 (2003) 73--146.

\bibitem[HM]{HM} J.Hamkins and A. Myasnikov, The halting problem is almost always decidable, Notre Dame
Journal of Formal Logic, 47 (2006) 515--524.

\bibitem[HT]{HT} J. Hughes, A. Tannenbaum, Length-based attacks for certain group based encryption rewriting systems, in {\em Workshop SECI02 Securite de la Communication sur Intenet}, September 2002, Tunis, Tunisia.

\bibitem[Jit]{Jit} T. Jitsukawa, Malnormal subgroups of free groups, in{\em Computational and Statistical Group Theory}, Contemporary Mathematics, AMS, 298, 2002, 83--96.

\bibitem[KMSS1]{KMSS1}  I.~Kapovich, A.~Myasnikov, P.~Schupp, V.~Shpilrain, Generic-case complexity and decision problems in group theory, J. of Algebra 264 (2003), 665--694.

\bibitem[KMSS2]{KMSS2} I.~Kapovich, A.~Myasnikov, P.~Schupp, V.~Shpilrain, Average-case complexity for the word and membership problems in group theory, Advances in Mathematics 190 (2005), 343--359.

\bibitem[KSS]{KSS} I. Kapovich, P. Schupp and V. Shpilrain, Generic properties of Whitehead's Algorithm and
isomorphism rigidity of random one-relator groups, Pacific J. Math.\ 223 (2006) 113--140.

\bibitem[KS1]{KS1} I. Kapovich and P. Schupp, Genericity, the Arzhantseva--Olshanskii method and
the isomorphism problem for one-relator groups, Math.\ Ann.\ 331 (2005) 1--19.

\bibitem[KS2]{KS2}  I. Kapovich and P. Schupp, Delzant's T-ivariant, one-relator
groups and Kolmogorov complexity, Comment.\ Math.\ Helv.\ 80 (2005), 911--933.

\bibitem[KRSS]{KRSS} I. Kapovich, I.Rivin, P. Schupp, V. Shpilrain, Densities in free
groups and $\mathbb{Z}^k$, visible points and test elements, Math.\ Research Letters, 14 (2007), pp. 263--284.

\bibitem[Kes1]{Kes1} H. Kesten, Symmetric random walks on groups, Trans.\ Amer.\ Math.\ Soc., 92 (1959) 336--354.

\bibitem[Kes2]{Kes2} H. Kesten, Full Banach mean values on countable groups, Math.\ Scand.\ 7 (1959) 146--156.

\bibitem[Kh]{Kh} Olga Kharlampovich, A finitely presented solvable group with unsolvable word problem. (Russian)  Izv.\ Akad.\ Nauk SSSR Ser.\ Mat.\  45  (1981) 852--873, 928.

\bibitem[KR]{KR} E.~G. Kukina, V.~A. Roman'kov, Subquadratic growth of the averaged Dehn
function for free Abelian groups,  Siberian Mathematical Journal, 44 (2003) 605--610.

\bibitem[Mah]{maher} J. Maher, Random walks on the mapping class group, arXiv:math/0604433, 2008.

\bibitem[MTV]{MTV} A. Martino, T.Turner, E. Ventura, The density of injective endomorphisms of a free group, preprint.

\bibitem[MR]{MR} Alexei G. Myasnikov and Alexander N. Rybalov, Generic complexity of undecidable problems, J. Symbolic Logic 73 (2008) 656--673.

\bibitem[MU]{MU}   A.G. Myasnikov and A.Ushakov, Random subgroups and analysis of the length-based and quotient attacks, Journal of Mathematical Cryptology,  1 (2007) 15--47.

\bibitem[MSU1]{MSUbook} A. G. Myasnikov, V. Shpilrain, A. Ushakov, {\em Advanced course on Group-Based Cryptography}, Quaderns, 42, CRM, Barcelona, 2007.

\bibitem[MSU2]{MSU} A. G. Myasnikov, V. Shpilrain and A. Ushakov, Random subgroups of
braid groups: an approach to cryptanalysis of a braid group based
cryptographic protocol, in {\em PKC 2006}, Lecture Notes Comp.\ Sci. 3958
(2006) 302--314.

\bibitem[MUW]{MUW} A. Myasnikov, A. Ushakov, D. W. Won, On the word problem for balanced semigroups and groups, preprint, 2009.

\bibitem[Oll]{Ollivier}  Y. Ollivier, Critical densities for random quotients of hyperbolic groups, C. R. Math.\ Acad.\ Sci.\ Paris 336 (2003) 391--394.

\bibitem[Olsh]{Olsh} A. Yu. Olshanskii, Almost every group is hyperbolic, Internat.\ J. of Algebra and Computation 2 (1992) 1--17.

\bibitem[Osi]{Osi07} D. Osin, Peripheral fillings of relatively hyperbolic groups, {Invent.\ Math.} 167 (2007) 295--326.

\bibitem[Rips]{Rips} E. Rips, Subgroups of small cancellation groups, Bull.\ London Math.\ Soc.\ 14 (1982) 45--47.

\bibitem[Riv]{Rivin} I. Rivin, Walks on groups, counting reducible matrices, polynomials, and surface and free group automorphisms, Duke math.\ J. 142 (2008) 353--379.

\bibitem[Rom]{Rom} V. A. Roman'kov, Asymptotic growth of averaged Dehn functions for nilpotent groups, Algebra and Logic, 46 (2007) 37--45.

\bibitem[RST]{RST} D. Ruinsky, A. Shamir, B. Tsaban, Cryptanalysis of group-based key agreement protocols using subgroup distance functions, in {\em Advances in Cryptology -- PKC 2007}, Lecture Notes in Computer Science, Springer, Berlin, 4450 2007 61--75.

\bibitem[Sha]{shalev} A. Shalev, Probabilistic group theory, in {\em Groups St. Andrews 1997 in Bath, II}, London Math. Soc., Lecture Notes Ser.\ 261, Cambridge Univ.\ Press, 648--679.

\bibitem[Woe]{woess} W. Woess, Cogrowth of groups and simple random walks, Arch.\ Math.\ 41 (1983) 363--370.

\bibitem[Zuk]{Zuk} A. Zuk, On property (T) for discrete groups, in {\em Rigidity in Dynamics and Geometry} (Cambridge, 2000), Springer, 2002, 473--482.

\end{thebibliography}
\end{document}